\documentclass[12pt]{article}
\usepackage[font=small]{caption}
\usepackage{tikz}
\usepackage[T1]{fontenc}
\usepackage{amssymb}
\usepackage{amsmath}
\usepackage{amsfonts}
\usepackage{amsthm}
\usepackage{hyperref}
\usepackage{enumitem}
\usepackage[showonlyrefs]{mathtools}
\mathtoolsset{showonlyrefs}
\numberwithin{equation}{section}
\newtheorem{theorem}{Theorem}[section]
\newtheorem{thmx}{Theorem}

\newtheorem*{theorembb}{Theorem B$^\prime$}
\newtheorem{lemma}{Lemma}[section]
\newtheorem{proposition}{Proposition}[section]

\theoremstyle{remark}

\newtheorem*{ack}{Acknowledgment}
\def\F{\mathcal{F}}
\def\R{\mathbb{R}}

\def\C{\mathbb{C}}

\def\N{\mathbb{N}}
\def\D{\mathbb{D}}
\def\O{\mathcal{O}}
\def\re{\operatorname{Re}}
\def\im{\operatorname{Im}}

\begin{document}
\title{Radially distributed values and normal families, II}
\author{Walter Bergweiler and Alexandre Eremenko\thanks{Supported by NSF grant DMS-1665115.}}
\date{}
\maketitle
%\centerline{\emph{Dedicated to Larry Zalcman}}
\begin{abstract}
We consider the family of all functions holomorphic in the unit disk
for which the zeros lie on one ray while the $1$-points lie on two different 
rays. We prove that for certain configurations of the rays this family is normal outside the
origin.
\end{abstract}
\section{Introduction and results}
There is an extensive literature on entire functions whose zeros and $1$-points are distributed
on finitely many rays. One of the first results of this type is the following theorem of 
Biernacki \cite[p.~533]{Biernacki1929} and Milloux~\cite{Milloux1927}.
\begin{thmx} \label{thm-bier-mill}
There is no transcendental entire function for which all zeros lie on one ray and
all $1$-points lie on a different ray.
\end{thmx}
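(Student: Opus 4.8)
The plan is to suppose that $f$ is transcendental and reach a contradiction. After a rotation we may assume the zeros lie on the ray $R_0=\{\arg z=0\}$ and the $1$-points on the ray $R_1=\{\arg z=\phi\}$ with $0<\phi<2\pi$. The two rays divide the plane into open sectors $\Sigma_1,\Sigma_2$ of openings $\phi$ and $2\pi-\phi$, and in the interior of each sector $f$ omits the three values $0,1,\infty$. The boundary zeros and $1$-points cause no trouble for angular Nevanlinna theory: in the counting term $C_{\Sigma_j}(r,a)$ a point on a bounding ray enters with a weight $\sin(\omega(\beta-\theta_1))$ that vanishes precisely on that ray, so $C_{\Sigma_j}(r,0)=C_{\Sigma_j}(r,1)=C_{\Sigma_j}(r,\infty)=0$. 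Hence the second main theorem in the angular domain $\Sigma_j$, applied to $0,1,\infty$, gives $S_{\Sigma_j}(r,f)\le Q_{\Sigma_j}(r,f)$ with a remainder $Q=O(\log(rS_{\Sigma_j}(r,f)))$, which forces $S_{\Sigma_j}(r,f)=O(\log r)$. Since $\Sigma_1$ and $\Sigma_2$ exhaust all directions, the standard comparison between the global characteristic and the angular characteristics over a covering shows that $f$ has finite order $\rho$.

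First I would treat the case $\rho=0$ separately, so assume $\rho>0$. The engine of the contradiction is the Phragmén--Lindelöf indicator $h_f(\theta)=\limsup_{r\to\infty}r^{-\rho}\log|f(re^{i\theta})|$. Because $f$ and $f-1$ differ by the constant $1$, they have the same indicator, $h_f\equiv h_{f-1}$: in directions where $f$ is large the two logarithms agree to within $o(1)$, and where $f$ stays bounded neither contributes to the upper limit. On the other hand, the Levin--Pfluger theory of functions of completely regular growth reads off the limiting angular distribution of the zeros from the indicator, the associated measure being, up to a positive constant, $h''+\rho^2h$ in the sense of distributions. For $f$ this measure is carried by the single direction $\arg z=0$, while for $f-1$, whose zeros are the $1$-points of $f$, it is carried by $\arg z=\phi$. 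As the indicators coincide, so do the two measures, which is impossible since $R_0\neq R_1$. This contradiction rules out transcendental $f$ and proves the theorem.

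The heart of the matter, and the step I expect to be hardest, is the rigidity asserted in the second paragraph: that confining the zeros to one ray forces the indicator to remember that ray. The clean identity equating $h''+\rho^2h$ with the angular density is available only for functions of completely regular growth, and, once the Hadamard exponential factor is present, only for non-integer $\rho$. I would therefore first have to argue that zeros confined to a single ray force completely regular growth (or else work throughout with the upper and lower indicators together with the upper density of the zeros), and then handle integer $\rho$ by carrying along the factor $e^{q(z)}$, which now contributes to $h_f$ at top order and must be shown not to move the singular support of the associated measure. The excluded case $\rho=0$, where the indicator machinery degenerates, I would dispose of by an elementary argument using the genus-zero Hadamard factorization together with a minimum-modulus estimate, exploiting that a transcendental function of order zero with its zeros on one ray cannot have its $1$-points confined to a different ray.
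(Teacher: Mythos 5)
The paper does not actually prove Theorem A; it quotes it, attributing the finite-order case to Biernacki and Milloux and the reduction to finite order to Edrei \cite{Edrei1955}. So your proposal has to be judged on its own, and its first half (finite order via angular Nevanlinna theory) is essentially a sketch of Edrei's theorem, which you could simply cite; as written it glosses over the fact that the remainder in the angular second main theorem involves the \emph{global} characteristic, so one cannot immediately read off $S_{\Sigma_j}(r,f)=O(\log r)$ without further work.

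The decisive gap is in the second half. The identity $h_f\equiv h_{f-1}$ is false: if $h_f(\theta)<0$ then $f(re^{i\theta})\to 0$, hence $|f(re^{i\theta})-1|\to 1$ and $h_{f-1}(\theta)=0\neq h_f(\theta)$. What is true is only $\max(h_f,0)=\max(h_{f-1},0)$, and since the Levin--Pfluger measure is $h''+\rho^2h$ rather than $(\max(h,0))''+\rho^2\max(h,0)$, the step ``same indicator, hence same angular distribution of zeros'' collapses exactly where it is needed. You can see that the argument proves too much: applied verbatim it would also show that no transcendental entire function can have its zeros on one ray and its $1$-points on two other rays, yet such functions exist (see \cite{BEH} and \cite{Eremenko2015}, cited in this paper); for those examples $f\to 0$ in a sector, so $h_f<0$ there while $h_{f-1}=0$, and indeed the two measures have different supports. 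So the flaw is not merely the completely-regular-growth and integer-order caveats you flag (which are themselves substantial: zeros on a ray do not by themselves give the zeros an angular density, which is what Levin--Pfluger requires); the central identity must be abandoned. A repair along classical lines would instead compare the subharmonic functions $\log|f|$ and $\log|f-1|$ directly in the sectors between the two rays, using Phragm\'en--Lindel\"of/harmonic-measure rigidity of the type appearing in Lemma~\ref{lem3} of this paper and in \cite{BEH} -- i.e., exploit that $\{|f|>1\}$ and $\{|f|<1\}$ are separated by the ray of $1$-points near infinity, which constrains the openings of the sectors and, for a single ray of $1$-points, is impossible.
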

Biernacki and Milloux proved this under the additional hypothesis that the function considered has finite 
order, but by a later result of  Edrei~\cite{Edrei1955} this is always the case 
if all zeros and $1$-points lie on finitely many rays.

A thorough discussion of the cases in which an entire function can have its zeros on one system 
of rays and its $1$-points on another system of rays, intersecting the first one only at $0$,
 was given in~\cite{BEH}. Special attention
was paid to the case where the zeros are on one ray while the $1$-points are on two rays. 
For this case the following result was obtained \cite[Theorem~2]{BEH}.
\begin{thmx} \label{thm-BEH}
Let $f$ be a transcendental entire function whose zeros lie on a ray $L_0$ and whose $1$-points
lie on two rays $L_1$ and $L_{-1}$, each of which is distinct from $L_0$. Suppose that
the numbers of zeros and $1$-points are infinite.
Then $\angle(L_0,L_1)=\angle(L_0,L_{-1})<\pi/2$.
\end{thmx}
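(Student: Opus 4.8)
The plan is to reduce everything to the Phragmén--Lindelöf indicator of $f$ and to exploit that $f$ and $f-1$ differ only by a constant. First I would normalise $L_0$ to be the positive real axis and record, using Edrei's theorem quoted above, that $f$ has finite order $\rho\in(0,\infty)$; by Hadamard's factorisation the zeros on $L_0$ and the $1$-points on $L_1\cup L_{-1}$ produce canonical products with all zeros on rays. Writing $h_f(\theta)=\limsup_{r\to\infty}r^{-\rho}\log|f(re^{i\theta})|$ and $h_{f-1}$ for the analogous quantity attached to $f-1$, the first key observation is that $\{\theta:h_f(\theta)>0\}=\{\theta:h_{f-1}(\theta)>0\}=:P$ and that $h_f=h_{f-1}=:h$ on $P$: where either function grows exponentially the two are asymptotically equal, while where neither grows the difference $1$ is negligible for the indicator. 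Thus the growth is governed by a single function $h$, while the zeros of $f$ and of $f-1$ are detected separately.

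The next step is to pin down the shape of the indicators. Since all zeros lie on finitely many rays I would invoke the Levin--Pfluger theory of completely regular growth: each canonical product has an angular zero-density, and its indicator is an explicit cosine bump $c\,\cos\rho(\pi-\delta(\theta))$, where $\delta(\theta)$ is the angular distance to the ray of zeros, $c>0$ is proportional to the density, and the only non-smooth point is a downward corner on that ray, whose derivative jump measures the density. Hence $h_f$ is a single bump centred opposite $L_0$, so $P=\{|\theta-\pi|<\pi/(2\rho)\}$, while $h_{f-1}$ is a sum of two bumps centred opposite $L_1$ and $L_{-1}$. On the open arc $P$ every bump is a genuine frequency-$\rho$ sinusoid, so the equality $h_f=h_{f-1}$ on $P$ propagates to a trigonometric identity; with $\alpha=\angle(L_0,L_1)$ and $\beta=\angle(L_0,L_{-1})$ it reads $c_0=c_1e^{i\rho\alpha}+c_{-1}e^{-i\rho\beta}$, whose real and imaginary parts are the basic density relations.

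To extract the two conclusions I would argue as follows. Because $f-1$ has no zeros on $L_0$, the function $h_{f-1}$ is smooth there, so the corner of $h_f$ on $L_0$ cannot be interior to $P$; symmetrically the corners of $h_{f-1}$ on $L_1,L_{-1}$ are not interior to $P$. This forces $h\le 0$ in all three ray directions. Evaluating the explicit bump for $h_f$ in the direction of $L_1$ gives $c_0\cos\rho(\pi-\alpha)\le0$, hence $\rho(\pi-\alpha)\ge\pi/2$, i.e.\ $\alpha\le\pi-\pi/(2\rho)$; once one knows $\rho<1$ this yields $\alpha<\pi/2$, and likewise for $\beta$. For the symmetry I would use that in non-integer order the exponential factor has degree below $\rho$ and does not affect the order-$\rho$ indicator, so $h_f$, built from zeros on the self-conjugate ray $L_0$, is symmetric about $L_0$; then $P$ is symmetric about $L_0$, and the requirement that the asymmetric two-bump function $h_{f-1}$ be positive exactly on $P$ and nonpositive at its own corners $L_1,L_{-1}$ over-determines the configuration and forces $\alpha=\beta$.

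The hard part is making the two "explicit" ingredients rigorous. First, I must establish completely regular growth and the existence of angular densities rather than assume them; here the hypotheses that the zeros and $1$-points are infinite and lie on rays are essential. Second, and most delicate, is the integer-order case: when $\rho\in\N$ the Weierstrass exponential factor may contain a term $az^\rho$, adding $\re(az^\rho)$ to $\log|f|$ and tilting the indicator, which destroys the symmetry argument above; one must show this term is incompatible with the matching (equivalently, that it must vanish). Finally, I would need the a priori bound $\tfrac12<\rho<1$: the lower bound follows from $h(\theta_0)\le 0$, which forces $\cos\rho\pi\le0$, while the upper bound requires a separate argument that zeros confined to a single ray cannot sustain growth of order $\ge 1$ against $1$-points on two rays. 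I expect the integer-order exponential factor, together with this order bound, to be the main obstacle; the trigonometric bookkeeping, once the cosine-bump form of the indicators is available, is comparatively routine.
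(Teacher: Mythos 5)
First, a point of reference: the paper does not prove Theorem~\ref{thm-BEH} at all --- it is quoted from \cite{BEH} (Theorem~2 there), so there is no in-paper argument to compare yours against. Judged on its own terms, your outline is a sensible heuristic but contains gaps that are not technical loose ends; one of them is the theorem itself. In your own setup the positivity set $P$ of the indicator is an arc of length $\pi/\rho$ centred opposite $L_0$, so your inequality $\alpha\le\pi-\pi/(2\rho)$ makes the desired conclusion $\alpha<\pi/2$ \emph{equivalent} to $\rho<1$. Deferring $\rho<1$ to ``a separate argument'' therefore defers essentially the entire content of the statement; nothing you write gives any mechanism for excluding $\rho\ge 1$, and the examples in \cite{BEH} show the admissible orders come close to $1$, so this cannot be a soft estimate.

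The second genuine gap is the cosine-bump form of the indicators. This presupposes Levin--Pfluger completely regular growth, i.e.\ that the zeros and the $1$-points possess angular densities, and moreover \emph{positive} ones (your corner argument at $L_0$ needs an actual derivative jump, whose size is the density). Having infinitely many zeros on a ray guarantees neither: $n(r)$ may oscillate, or satisfy $n(r)=o(r^\rho)$ while the exponent of convergence is still $\rho$, in which case the canonical product has minimal type, its order-$\rho$ indicator vanishes identically, and $P$ is empty --- the whole scheme then says nothing. The standard repair, and the one whose local analogue this paper actually uses (normalizing $\log|f_k|$ by $\log|f_k(\tfrac12)|$ in the proof of Theorem~\ref{thm1} and classifying the subharmonic limits via Lemma~\ref{lem3}), is to renormalize $\log|f(rz)|$ by a quantity adapted to $f$ such as $\log M(r,f)$, pass to subsequential subharmonic limits that are harmonic off $L_0$, and apply a rigidity statement of the type of Lemma~\ref{lem3}; this avoids any density or regular-growth hypothesis. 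Finally, the concluding step ``over-determines the configuration and forces $\alpha=\beta$'' is an assertion, not an argument, and the integer-order exponential factor is flagged but never dealt with. As written, the proposal reduces the theorem to three unproved claims (regular growth, $\rho<1$, vanishing of the exponential term) of which the second is equivalent to the conclusion.
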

The hypothesis that $f$ has infinitely many zeros excludes the example
$f(z)=e^z$ in which case we have $\angle(L_1,L_{-1})=\pi$, and $L_0$ can be taken 
arbitrarily.
Without this hypothesis we have the following result.
\begin{theorembb} 
Let $f$ be a transcendental entire function whose zeros lie on a ray $L_0$ and whose $1$-points
lie on two rays $L_1$ and $L_{-1}$, each of which is distinct from $L_0$. 
Then $\angle(L_1,L_{-1})=\pi$ or $\angle(L_0,L_1)=\angle(L_0,L_{-1})<\pi/2$.
\end{theorembb}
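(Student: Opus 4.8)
The plan is to reduce to Theorem~\ref{thm-BEH} in the generic case and to dispose of the two degenerate cases by an exponential-polynomial analysis. First I would invoke Edrei's theorem (cited above): since all zeros and all $1$-points of $f$ lie on the three rays $L_0,L_1,L_{-1}$, the function $f$ has finite order, so in particular $S(r,f)=O(\log r)$. Next, by the second main theorem applied to the values $0,1,\infty$, a transcendental entire function cannot have both $0$ and $1$ as values with only finitely many preimages, for otherwise $T(r,f)\le \bar N(r,0)+\bar N(r,1)+S(r,f)=O(\log r)$. Hence exactly one of the following holds: (i) both the zeros and the $1$-points are infinite in number; (ii) there are finitely many zeros; (iii) there are finitely many $1$-points. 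In case (i), Theorem~\ref{thm-BEH} applies directly and yields $\angle(L_0,L_1)=\angle(L_0,L_{-1})<\pi/2$, the second alternative.

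In case (ii), finite order together with the Hadamard factorization gives $f=Pe^{g}$ with $P$ a polynomial (whose zeros lie on $L_0$) and $g$ a polynomial of degree $d\ge 1$, the degree being positive because $f$ is transcendental. I would then study the zeros of the exponential polynomial $Pe^{g}-1$, that is, the $1$-points of $f$. Writing the leading term of $g$ as $az^{d}$, the plane is divided by the $2d$ rays on which $\re(az^{d})=0$ into $2d$ sectors where $\re g\to+\infty$ and $\re g\to-\infty$ alternately. Away from these critical rays one has $|Pe^{g}|\gg 1$ or $|Pe^{g}|\ll 1$, so $Pe^{g}-1$ is bounded away from $0$; hence every $1$-point of large modulus lies in a thin neighborhood of a critical ray. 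Since each critical ray borders exactly one sector with $\re g\to-\infty$, and along that boundary $\im g$ is unbounded, a Rouché/argument-principle count shows that infinitely many $1$-points cluster along \emph{each} of the $2d$ critical rays. Thus the $1$-points have exactly $2d$ accumulation directions; as they are confined to $L_1\cup L_{-1}$, there are at most two, forcing $d=1$. The two critical rays of a linear $g$ are opposite, so $\angle(L_1,L_{-1})=\pi$, the first alternative.

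Case (iii) I expect to be vacuous. Here $f-1=Qe^{h}$ with $Q$ a polynomial and $h$ a polynomial of degree $d\ge 1$, and the zeros of $f$ are the zeros of $Qe^{h}+1$. The same clustering analysis shows that these accumulate along the $2d\ge 2$ critical rays of $h$, which include at least one opposite pair; consequently the zeros cannot all lie on the single ray $L_0$, contradicting the hypothesis. Combining the three cases proves the dichotomy.

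I anticipate that the main obstacle is the clustering step in case (ii). One must prove both that no $1$-point strays away from the critical rays---a Phragmén--Lindelöf type upper estimate that absorbs the polynomial factor $P$ and the logarithmic drift it induces in $\re g=-\log|P|+o(1)$---and that each relevant critical ray genuinely captures infinitely many $1$-points, via a winding count. Only once both directions are controlled does the accumulation set have precisely $2d$ directions, so that the crucial inequality $2d\le 2$ can be extracted; turning this asymptotic picture into the exact conclusion $\angle(L_1,L_{-1})=\pi$ is the delicate point.
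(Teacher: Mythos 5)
Your proposal is correct in outline, but it takes a genuinely different route from the paper. The paper does not argue through the entire function at all: it deduces Theorem~B$^\prime$ as a corollary of Theorem~\ref{thm1} (the normal family statement, which is the main result and whose proof occupies the rest of the paper), using the standard rescaling observation that for a transcendental entire $f$ and a sequence $z_k\to\infty$ with $|f(z_k)|\leq 1$ the family $\{f(2|z_k|z)\}_{k\in\N}$ fails to be normal at some point of modulus $\tfrac12$; if neither conclusion of Theorem~B$^\prime$ held, Theorem~\ref{thm1} would force normality there. You instead split according to whether the zeros and $1$-points are both infinite in number (where you quote Theorem~\ref{thm-BEH}) or one of them is finite (where you use Edrei's finite-order result, Hadamard factorization, and the classical asymptotics of $1$-points of $Pe^{g}$ along the $2d$ critical rays of $\re(az^d)$). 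Your case analysis is sound: the second main theorem does rule out both value sets being finite, the clustering of $1$-points near the critical rays and the fact that each critical ray attracts infinitely many of them are true and provable by the Phragm\'en--Lindel\"of and argument-principle arguments you indicate, and the resulting bound $2d\leq 2$ does force $d=1$ and hence antipodal rays. What each approach buys: the paper's derivation is two lines long but consumes the full strength of the hard normal-families theorem; yours is independent of the new results of the paper, cleanly isolates where the alternative $\angle(L_1,L_{-1})=\pi$ arises (the case $f=Pe^{az+b}$), but leans on Theorem~\ref{thm-BEH} from \cite{BEH} for the main case and still requires you to write out carefully the sectorial zero-counting for exponential polynomials, which you rightly identify as the only step needing real work.
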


Bloch's heuristic principle says that the family of all functions holomorphic in some domain which have a
certain property is likely to be normal if there does not exist a non-constant entire function with
this property. More generally, properties which are satisfied only by ``few'' entire functions often
lead to normality. 
We refer to~\cite{Bergweiler2006}, \cite{Steinmetz2017} and~\cite{Zalcman1998}
for a thorough discussion of Bloch's principle.

The following normal family analogue of Theorem~\ref{thm-bier-mill} was proved in~\cite[Theorem~1.1]{BE}.
Here $\D$ denotes the unit disk.
\begin{thmx} \label{thm-BE}
Let $L_0$ and $L_1$ be two distinct rays emanating from the origin and
let $\F$ be the family of all functions holomorphic in $\D$ for which all zeros
lie on $L_0$ and all $1$-points lie on $L_1$.
Then $\F$ is normal in $\D\backslash\{0\}$.
\end{thmx}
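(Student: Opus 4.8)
The plan is to argue by contradiction using the Zalcman--Pang rescaling lemma. Suppose $\F$ is not normal at some point $z_0\in\D\setminus\{0\}$. Then there are functions $f_n\in\F$, points $z_n\to z_0$ and numbers $\rho_n\to 0^+$ such that
$$g_n(\zeta):=f_n(z_n+\rho_n\zeta)\to g(\zeta)$$
locally uniformly in $\C$, where $g$ is a non-constant entire function with bounded spherical derivative; in particular $g$ has order at most $2$. The zeros of $g_n$ lie in $(L_0-z_n)/\rho_n$ and its $1$-points in $(L_1-z_n)/\rho_n$. Since $L_0$ and $L_1$ are distinct rays from the origin they meet only at $0$, and $z_0\neq 0$, so $z_0$ lies on at most one of them. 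The easy case is $z_0\notin L_0\cup L_1$: then $\operatorname{dist}(z_n,L_0)/\rho_n\to\infty$ and likewise for $L_1$, so both rescaled sets leave every compact set, $g$ omits $0$ and $1$, and $g$ is constant by Picard's theorem, a contradiction. This already yields normality off $L_0\cup L_1$.

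Next suppose $z_0\in L_0\setminus\{0\}$, the case $z_0\in L_1\setminus\{0\}$ being symmetric. Now $(L_1-z_n)/\rho_n$ escapes to infinity, so $g$ omits $1$, whereas $(L_0-z_n)/\rho_n$ converges to the full line $\ell_0$ through $0$ in the direction of $L_0$ — the endpoint $-z_n/\rho_n$ of the ray runs off to infinity — so all zeros of $g$ lie on $\ell_0$. By Picard, $g-1=e^{h}$ with $h$ entire, and finite order forces $h$ to be a polynomial. The zeros of $g$ are the solutions of $h=i\pi(2k+1)$, $k\in\Z$, an infinite set lying on $\ell_0$. A short argument excludes $\deg h\ge 2$: the preimage of a line under a polynomial of degree $\ge 2$ is not contained in a line, and the discrete solutions split among its branches, so they cannot all lie on $\ell_0$. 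Hence $h$ is linear and $g(\zeta)=1+e^{a\zeta+b}$, with $a$ forced to be parallel to $i$ times the direction of $L_0$ so that the zeros sit on $\ell_0$.

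The main obstacle is precisely that this exponential limit is fully consistent with the local value-distribution data: $1+e^{a\zeta+b}$ genuinely has all its zeros on a line and omits $1$, so Picard together with the blow-up analysis cannot exclude it. The rigidity must come instead from the global geometry, namely from the fact that $L_0$ and $L_1$ are rays sharing the common endpoint $0$ rather than full lines. The plan for this final, hardest step is to exploit the confinement of the $1$-points to the \emph{single} ray $L_1$. The limit $g=1+e^{a\zeta+b}$ forces $f_n$ to carry zeros of spacing $\asymp\rho_n$ accumulating at $z_0$ along $L_0$; I would track this zero distribution together with the condition $f_n=1$ only on $L_1$ and, after a renormalization adapted to the distance to the origin so that the endpoint of the ray survives in the limit, aim to extract an entire function whose zeros lie on a ray and whose $1$-points lie on a ray. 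This would contradict Theorem~\ref{thm-bier-mill}.

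Controlling the zero density all the way to the origin and carrying out this second limit cleanly is where the real work lies, and I expect it to be the crux of the argument. As an alternative to reconstructing an entire function, one can attempt a direct estimate on $\D$: the oscillation of $\arg f_n$ around the clustering zeros dictated by $g$ should be shown incompatible with the requirement that $f_n$ attain the value $1$ only on the one ray $L_1$, for instance through the argument principle on small circles about $z_0$ or through Nevanlinna-type counting for $f_n$ and $f_n-1$ restricted to sectors bounded by the rays. Either route ultimately converts the local non-normality encoded by the exponential limit into a violation of the ray rigidity guaranteed by Theorem~\ref{thm-bier-mill}.
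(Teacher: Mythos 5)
Your reduction to the rescaling limit is fine as far as it goes: normality off $L_0\cup L_1$ follows from Montel, and at a point of one ray the Zalcman limit must indeed be of the form $1+e^{a\zeta+b}$ (respectively $e^{a\zeta+b}$ at a point of $L_1$). But you have correctly located, and then not carried out, the entire content of the theorem. The final step is left as two alternative ``plans'' (``I would track\ldots'', ``one can attempt\ldots''), and neither works as described. A second rescaling ``adapted to the distance to the origin'' cannot produce an entire function to feed into Theorem~\ref{thm-bier-mill}: the functions $f_n$ are defined only on $\D$, so any limit that keeps the vertex $0$ in view lives on a bounded region, not on $\C$; and Theorem~\ref{thm-bier-mill} concerns \emph{transcendental} entire functions, so even if such a limit existed you would still have to exclude constant, polynomial and zero-free limits. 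Moreover, the knowledge that the first limit is $1+e^{a\zeta+b}$ controls $f_n$ only on disks of radius $O(\rho_n)$ about $z_n$, which is far too little to ``control the zero density all the way to the origin'' or to run an argument-principle count on circles of fixed radius.

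The proof this paper relies on (Theorem 1.1 of \cite{BE}, quoted here as Theorem~\ref{thm-BE} and deduced there from Theorem~\ref{thm-BE1}) supplies exactly the machinery your sketch omits: the refinement of Zalcman's lemma (Lemma~\ref{lemma2}), which produces rescalings satisfying $g_k^\#(z)\le 1+|z|/R_k$ on disks $D(0,R_k)$ with $R_k\to\infty$ and thereby upgrades the qualitative limit $1+e^{a\zeta+b}$ to a quantitative representation $g_k=\exp(c_k(z-b_k)+\delta_k(z))$ with explicit bounds on $\delta_k$ on a disk of radius comparable to $R_k$; Landau's theorem, to propagate the resulting inequalities $|f_k|>1$ and $|f_k|<1$ from scale $\rho_kR_k$ up to a fixed scale, yielding the dichotomy $f_k\to 0$ on one side of the ray and $f_k\to\infty$ on the other; and finally a subharmonic-limit and harmonic-measure argument to turn that dichotomy into a contradiction with the global configuration of the two rays. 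So the easy half of your argument is correct, but there is a genuine gap precisely at the step you yourself flag as the crux, and the ideas you propose for bridging it would not succeed without the quantitative rescaling control described above.
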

The purpose of this paper is to prove a normal family analogue of Theorem~B$^\prime$.
\begin{theorem}
\label{thm1}
Let $L_0$, $L_1$ and $L_{-1}$ be three distinct rays emanating from the origin and
let $\F$ be the family of all functions holomorphic in $\D$ for which all zeros lie
on $L_0$ and all $1$-points lie on $L_{1}\cup L_{-1}$.
Assume that neither $\angle(L_{-1},L_1)=\pi$ nor
$\angle(L_0,L_1)=\angle(L_0,L_{-1})< \pi/2$.
Then $\F$ is normal  in $\D\setminus \{0\}$.
\end{theorem}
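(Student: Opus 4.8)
I would argue by contradiction, following the strategy used to prove Theorem~\ref{thm-BE} and feeding in Theorem~B$^\prime$ in place of Theorem~\ref{thm-bier-mill}. Suppose $\F$ fails to be normal at some $z_0\in\D\setminus\{0\}$. If $z_0$ lies on none of the three rays, then a small disk about $z_0$ meets none of them, so every $f\in\F$ omits both $0$ and $1$ there; by Montel's theorem $\F$ is normal at $z_0$, a contradiction. Hence $z_0\in L_0\cup L_1\cup L_{-1}$, and since the three rays meet only at the origin while $z_0\neq 0$, it lies on exactly one of them. Thus it suffices to treat the cases $z_0\in L_0$ and $z_0\in L_1\cup L_{-1}$.

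The engine of the proof is a rescaling argument combined with the scale invariance of $\F$: if $f\in\F$ and $0<\lambda\le 1$, then $z\mapsto f(\lambda z)$ again has its zeros on $L_0$ and its $1$-points on $L_1\cup L_{-1}$ (the rays are invariant under positive dilations, and the domain only grows), so it again belongs to $\F$. Applying the rescaling lemma of Zalcman at $z_0$ yields $f_j\in\F$, points $z_j\to z_0$ and numbers $\rho_j\to 0^+$ such that $\zeta\mapsto f_j(z_j+\rho_j\zeta)$ converges locally uniformly in the spherical metric to a nonconstant limit $F$ with $F^\#(\zeta)\le F^\#(0)=1$, so $F$ has finite order. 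By Hurwitz's theorem the zeros of $F$ lie on the limit of the rescaled copies of $L_0$ and its $1$-points on the limits of the rescaled copies of $L_1,L_{-1}$. When $z_0\in L_0$ the latter recede to infinity, so $F$ omits $1$; symmetrically $F$ omits $0$ when $z_0\in L_1\cup L_{-1}$. The decisive step is to \emph{promote} this local picture to a global one: using the dilation invariance to align the rescalings at all radii along the ray through $z_0$, I would extract a limit whose zero set (resp.\ $1$-point set) is the full ray $L_0$ itself rather than merely a line parallel to it, thereby obtaining a nonconstant entire function $F$ whose zeros lie on $L_0$ and whose $1$-points lie on $L_1\cup L_{-1}$.

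With such an $F$ in hand the conclusion is immediate. The local limit omits one of the values $0,1$, and a nonconstant entire function omitting a value takes every other value infinitely often; hence $F$ has infinitely many zeros (when $z_0\in L_0$) or infinitely many $1$-points (when $z_0\in L_1\cup L_{-1}$), so $F$ is transcendental. Since the zeros of $F$ lie on $L_0$ and its $1$-points on $L_1\cup L_{-1}$, Theorem~B$^\prime$ forces $\angle(L_{-1},L_1)=\pi$ or $\angle(L_0,L_1)=\angle(L_0,L_{-1})<\pi/2$, both of which are excluded by hypothesis. This contradiction shows that $\F$ is normal in $\D\setminus\{0\}$.

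The principal difficulty is precisely the promotion step of the second paragraph. A bare application of Zalcman's lemma records only that the zeros (or $1$-points) of the limit lie on a line parallel to the relevant ray, and this local information is by itself consistent with membership in $\F$: for instance $1+e^{a\zeta+b}$ has its zeros on a line and omits $1$, yet tells us nothing about the ray structure, because the rescaling pushes the origin to infinity (and since $F$ has finite order, such a degenerate limit would have its zeros on an \emph{entire} line through the origin, not on a half-line). What must be exploited is the invariance of $\F$ under the dilations $f(z)\mapsto f(\lambda z)$, which links the rescaling limits at different radii along the ray and allows one to recover the origin, and hence the ray itself, in the limit. Carrying this out rigorously---together with the verification that the limit is entire rather than merely meromorphic, and that the two-ray distribution of the $1$-points survives the passage to the limit in a form to which Theorem~B$^\prime$ applies---is where the substance of the argument lies, and is the part I would model on the proof of Theorem~\ref{thm-BE}.
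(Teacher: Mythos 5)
Your reduction to Bloch's principle stalls exactly at the step you flag as the ``principal difficulty,'' and that step cannot be carried out in the way you sketch; it is not a technical detail to be modelled on the proof of Theorem~\ref{thm-BE} but the entire content of the theorem. A Zalcman rescaling at $z_0\neq 0$ uses scales $\rho_k\to 0$ that are in no way tied to $|z_k|$, so the limit function sees only one line (the ray through $z_0$, straightened to a full line) while the other two rays recede to infinity; all information about the angles $\angle(L_0,L_{\pm 1})$ is destroyed. Since the conclusion of Theorem~\ref{thm1} genuinely depends on these angles --- the families $\{f(kz)\}$ built from the functions of \cite{BEH,Eremenko2015} show normality \emph{fails} for $\angle(L_0,L_{\pm 1})\in(0,\pi/3]\cup\{2\pi/5\}$ --- no argument whose only input is a local rescaling limit can possibly decide the matter. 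Your proposed fix, ``aligning the rescalings at all radii along the ray'' to recover the origin and hence the rays themselves, is precisely the assertion that a nonconstant entire function in the original class $\F$ exists whenever normality fails; no mechanism for producing it is given, and the paper does not produce one either. (Note also that the logical direction is the reverse of what you want: the paper deduces Theorem~B$^\prime$ \emph{from} Theorem~\ref{thm1}, not the other way around.)

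The paper's actual argument is global and runs along entirely different lines. It first disposes of non-normality on $L_0\setminus\{0\}$ by playing the dichotomy of Theorem~\ref{thm-BE1} (applied to $1-f$ across $L_0$ and to $f$ across $L_{\pm 1}$) against itself in the three sectors $S$, $S^+$, $S^-$. For non-normality at a point of $L_1\setminus\{0\}$ it normalizes $u_k=\log|f_k|/\log|f_k(\tfrac12)|$, uses Proposition~\ref{prop1} to build closed curves on which $|f_k|\gtrless 1$ is controlled, and bounds $u_k$ via a harmonic-measure comparison (Lemma~\ref{lem4}); a subsequential subharmonic limit $u$ is positive exactly on the sector $S$ and nonpositive elsewhere, and Lemma~\ref{lem3} then forces $\alpha\geq\pi/2$, with $\beta=\pi$ when $\alpha>\pi/2$ --- i.e.\ exactly one of the two excluded configurations. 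You would need to replace your second paragraph with machinery of this kind; as written, the proposal identifies the right obstruction but does not overcome it.
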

It was shown in~\cite[Theorem~3]{BEH} that if $\alpha$ is of the form $\alpha=2\pi/n$ 
with $n\in\N$, $n\geq 5$, then there exist rays $L_0$ and $L_{\pm 1}$ with
$\angle(L_0,L_1)=\angle(L_0,L_{-1})=\alpha$ and an entire function $f$ with all 
zeros on $L_0$ and all $1$-points on $L_1$ and $L_{-1}$.
In~\cite{Eremenko2015} such an entire function $f$ was constructed for every $\alpha\in (0,\pi/3]$.

The functions constructed in~\cite{BEH,Eremenko2015} have the property that $f(re^{i\theta})\to 0$ 
as $r\to\infty$ for $|\theta|<\alpha$ while
$f(re^{i\theta})\to \infty$ as $r\to\infty$ for $\alpha<|\theta|\leq\pi$.
Considering the family $\{f(kz)\}_{k\in\N}$ we see that the conclusion of Theorem~\ref{thm1}
does not hold if $\angle(L_0,L_1)=\angle(L_0,L_{-1})\in (0,\pi/3]\cup\{2\pi/5\}$.
The example $\{e^{kz}\}_{k\in\N}$ shows that it does not hold if $\angle(L_{-1},L_1)=\pi$.

The question whether the conclusion of Theorem~\ref{thm1} holds if
$\angle(L_0,L_1)=\angle(L_0,L_{-1})\in (\pi/3,\pi/2) \setminus\{2\pi/5\}$ remains open.

We note that Theorem~B$^\prime$ follows from Theorem~\ref{thm1}. To see this we only
have to note that if $f$ is a transcendental entire function and $(z_k)$ is a sequence
tending to $\infty$ such that $|f(z_k)|\leq 1$ for all $k\in\N$, then 
$\{f(2|z_k|z)\}_{k\in\N}$ is not normal at some point of modulus~$\frac12$; see the remark
after Theorem~1.1 in~\cite{BEH}.

A key tool in the theory of normal families is Zalcman's lemma~\cite{Zalcman1975}; see Lemma~\ref{lemma1} below.
An extension of this result (Lemma~\ref{lemma2} below)
was also crucial in the proof of Theorem~\ref{thm-BE} in~\cite{BE}.
In fact, this extension was  used to prove the
following result~\cite[Theorem 1.3]{BE} from which Theorem~\ref{thm-BE} can be deduced.
\begin{thmx} \label{thm-BE1}
Let $D$ be a domain and let $L$ be a straight line which divides
$D$ into two subdomains $D^+$ and $D^-$.
Let $\F$ be a family of functions holomorphic in $D$ which do not have zeros
in $D$ and for which all $1$-points lie on~$L$.

Suppose that $\F$ is not normal at $z_0\in D\cap L$ and let $(f_k)$ be a sequence
in $\F$ which does not have a subsequence converging in any neighborhood of~$z_0$.
Suppose that $(f_k|_{D^+})$ converges.
Then either $f_k|_{D^+}\to 0$ and $f_k|_{D^-}\to \infty$  or
$f_k|_{D^+}\to \infty$ and $f_k|_{D^-}\to 0$.
\end{thmx}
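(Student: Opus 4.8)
The plan is to reduce the theorem to a single gluing statement across $L$ and then to establish that statement with the help of the extension of Zalcman's lemma. First I would localize and record the easy structure: fix a disk $B\subset D$ centred at $z_0$ meeting $L$ in a diameter, and put $B^{\pm}=B\cap D^{\pm}$. On $B^{+}$ each $f_k$ omits the values $0$ and $1$, so Montel's theorem shows that $(f_k|_{B^{+}})$ is normal; by hypothesis it converges, to a limit $\phi$ that is either holomorphic on $B^{+}$ or identically $\infty$. Arguing the same way on $B^{-}$ and passing to a subsequence, I obtain $f_k|_{B^{-}}\to\psi$, again holomorphic or $\equiv\infty$. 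Since the $f_k$ are zero-free, Hurwitz's theorem shows that each of $\phi$ and $\psi$ is either identically $0$ or else omits $0$; in particular a side-limit takes the value $0$ only if it vanishes identically.

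The heart of the matter is the following gluing principle, whose hypotheses are invariant under $f_k\mapsto 1/f_k$ (this involution preserves both ``zero-free'' and ``all $1$-points on $L$''): \emph{if both side-limits are finite and holomorphic, then $(f_k)$ converges locally uniformly on $B$, and in particular $\F$ is normal at $z_0$.} Granting this, the theorem follows by a short logical argument. Applying the principle to $(f_k)$ shows that $\phi$ and $\psi$ cannot both be finite and holomorphic, since otherwise $(f_k)$ would converge near $z_0$, contrary to hypothesis; hence at least one of them is $\equiv\infty$. Applying it to $(1/f_k)$, whose side-limits are $1/\phi$ and $1/\psi$, shows likewise that $1/\phi$ and $1/\psi$ cannot both be finite and holomorphic; since the $f_k$ are zero-free, this forces at least one of $\phi,\psi$ to attain the value $0$, hence to be $\equiv0$ by the previous paragraph. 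As only two sides are available, one side-limit must be $\equiv0$ and the other $\equiv\infty$, which is exactly the asserted dichotomy; note that this also rules out the deceptive configurations, such as $\phi$ finite and nonzero with $\psi\equiv\infty$, in which both limits omit $0$.

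It remains to prove the gluing principle, and this is the step I expect to be the main obstacle. The difficulty is that locally uniform convergence on the \emph{open} half-disks $B^{\pm}$ gives no control as $z\to L$: the limits $\phi,\psi$ need not extend continuously across $L$, so one cannot simply bound $\{f_k\}$ on a full neighbourhood of a point of $L$ and quote Vitali's theorem. To handle this I would argue by contradiction: if $(f_k)$ were not normal at some point of $B\cap L$, then the extension of Zalcman's lemma (Lemma~\ref{lemma2}) would produce rescalings converging to a nonconstant limit function $g$, meromorphic on $\C$, that is zero-free and whose $1$-points all lie on a line $\ell$; on each of the two half-planes bounded by $\ell$ the function $g$ then omits $0$ and $1$. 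The task is to show that such a $g$ is incompatible with the finiteness of the macroscopic limits $\phi$ and $\psi$, by transferring the two-sided boundedness of $f_k$ across $L$ down to the scale of the rescaling. The delicate point is that $g$ may a priori possess poles on $\ell$, so excluding such behaviour and thereby forcing $g$ to be constant requires using the full strength of the constraints—zero-free everywhere, all $1$-points confined to $\ell$, and finiteness on both sides—together with Lemma~\ref{lemma2}; identifying the precise mechanism that rules out these admissible-looking nonconstant candidates is where I expect the argument to demand the most care.
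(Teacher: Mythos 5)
Your soft analysis is sound, and the reduction via the involution $f\mapsto 1/f$ (which preserves both hypotheses, zero-freeness and the location of the $1$-points) is a tidy way to upgrade the statement ``at least one side-limit is $\equiv\infty$'' to the full $0/\infty$ dichotomy. But, as you concede, all of the substance has been deposited in the unproven ``gluing principle'', which is precisely the contrapositive of the hard half of Theorem~\ref{thm-BE1}. What you have written is therefore a reformulation of the theorem, not a proof of it. Two further points in your sketch of the remaining step are off target. The worry about poles of the Zalcman limit $g$ on the line $\ell$ is vacuous: the rescaled functions $g_k$ are holomorphic on disks $D(0,R_k)$ with $R_k\to\infty$, so the spherical limit is either entire or $\equiv\infty$, and being nonconstant it is entire and, by Hurwitz, zero-free. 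Also, before one can even say that the $1$-points of $g$ lie on a line one must check that the rescaled copies of $L$ do not escape to infinity; this is where Picard enters (otherwise $g$ would omit $0$ and $1$ on all of $\C$ and be constant).

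The genuine gap is the quantitative mechanism that your sketch does not identify. First, $g$ must be pinned down completely: the refinement in Lemma~\ref{lemma2} (used with $\varphi(t)=(\log t)^2$) yields $g_k^\#(z)\leq 1+|z|/R_k$ and $g_k^\#(0)=1$, hence $g^\#\leq 1$ on $\C$ with $g^\#(0)=1$; a zero-free entire function with bounded spherical derivative is $e^{az+b}$, and the normalization together with the location of the $1$-points forces $a=\pm 2i$ when $L=\R$ --- this is the content of \eqref{2d} and \eqref{3k}. Second, and this is the step no purely qualitative limit argument can supply, the resulting microscopic dichotomy ($|g_k|>1$ on one side of the line near a $1$-point $b_k$ and $|g_k|<1$ on the other, obtained from the explicit error bound \eqref{2f} on $\delta_k$) must be transported back to the macroscopic scale of $D^{\pm}$; in \cite{BE} this is done by running the estimate out to distance comparable with $R_k$ in the rescaled coordinates and then invoking Landau's theorem for functions omitting $0$ and $1$ on $D^{+}$, exactly as recalled in the proof of Proposition~\ref{prop1}. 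Without this bridge, locally uniform convergence of $f_k$ on the open half-disks $B^{\pm}$ gives no control whatsoever near $L$ --- which is the obstruction you correctly named at the outset and did not overcome.
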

Note that $\F$ is normal in $D^+$ by Montel's theorem.
So it is no restriction to assume that $(f_k|_{D^+})$ converges, since this can be achieved
by passing to a subsequence.

Theorem~\ref{thm-BE1} will also play an important role in the proof of Theorem~\ref{thm1}.
However, we will also need the following addendum to Theorem~\ref{thm-BE1}.
Here and in the following $D(a,r)$ and $\overline{D}(a,r)$ denote
the open and closed disk of radius $r$ centered at a point $a\in\C$.

\begin{proposition}\label{prop1}
Let $D$, $L$, $\F$, $z_0$ and $(f_k)$ be as in Theorem~\ref{thm-BE1}.
Let $r>0$ with $\overline{D}(z_0,r)\subset D$.
Then for sufficiently large $k$ there exists a $1$-point $a_k$ of $f_k$ such that $a_k\to z_0$ and
if $M_k$ is the line orthogonal to $L$ which intersects $L$ at $a_k$, then
$|f_k(z)|\neq 1$ for $z\in M_k\cap \overline{D}(z_0,r)\setminus \{a_k\}$.
\end{proposition}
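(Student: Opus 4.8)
The plan is to reduce the whole statement to a single fact about the level set $\{|f_k|=1\}$ near $L$, namely that for large $k$ each orthogonal chord meets it exactly once. First I would normalize the picture: by Theorem~\ref{thm-BE1}, after possibly interchanging $D^+$ and $D^-$, we have $f_k\to 0$ locally uniformly in $D^+$ and $f_k\to\infty$ locally uniformly in $D^-$. Choose coordinates so that $L=\R$ and $z_0=0$, with $D^+$ the upper and $D^-$ the lower half. Since the $f_k$ have no zeros in $D$, the functions $u_k:=\log|f_k|$ are harmonic, and the hypotheses give $u_k\to-\infty$ on compact subsets of $D^+$ and $u_k\to+\infty$ on compact subsets of $D^-$. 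Fixing a small $\delta>0$, for large $k$ we get $u_k<0$ on $\{\im z\ge\delta\}\cap\overline{D}(0,r)$ and $u_k>0$ on $\{\im z\le-\delta\}\cap\overline{D}(0,r)$, so $\gamma_k:=\{|f_k|=1\}\cap\overline{D}(0,r)$ lies in the thin strip $|\im z|<\delta$. The existence of $1$-points near $z_0$ is immediate: if some neighborhood of $0$ contained no $1$-points along a subsequence, there $f_k$ would omit $0$ and $1$ and hence be normal by Montel's theorem, contradicting the choice of $(f_k)$; thus there are $1$-points $a_k\in L$ with $a_k\to 0$.

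The heart of the matter is the claim that, for large $k$, the harmonic function $u_k$ is strictly decreasing along every vertical segment contained in a fixed strip $|\im z|<\delta'$ over $I:=L\cap\overline{D}(0,r)$. Since $\partial_t u_k=-\im(f_k'/f_k)$ by the Cauchy--Riemann equations (where $f_k'/f_k=(\log f_k)'$ is holomorphic because $f_k\neq0$), this amounts to
\[
\im\frac{f_k'}{f_k}>0 \qquad\text{on a neighborhood of the compact segment } I.
\]
Granting this, the proposition follows quickly. Along the orthogonal chord $M_k=\{\re z=a_k\}$ the map $t\mapsto u_k(a_k+it)$ is strictly decreasing on $|t|<\delta'$, so it vanishes there only at $t=0$, i.e.\ only at the $1$-point $a_k$, where $u_k(a_k)=\log 1=0$; for $|t|\ge\delta$ the sign control from Theorem~\ref{thm-BE1} forbids further zeros on $M_k\cap\overline{D}(0,r)$. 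Choosing $\delta<\delta'$ so that the two ranges overlap, $a_k$ is the unique point of $M_k\cap\overline{D}(0,r)$ with $|f_k|=1$, which is exactly the assertion.

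To prove the displayed inequality I would argue by contradiction via rescaling. Suppose it failed, so that $\im(f_k'/f_k)(p_k)\le 0$ for some $p_k\to p_*\in I$ along a subsequence. Blowing up $f_k$ at $p_*$ by Zalcman's lemma yields $w_k\to p_*$, $\rho_k\to0^+$ and a nonconstant limit with $f_k(w_k+\rho_k\zeta)\to g(\zeta)$ locally uniformly. Applying Hurwitz' theorem to $f_k$ and to $1/f_k$ shows that $g$ is entire and zero-free, hence $g=e^{\psi}$; a short argument using that a Zalcman limit has bounded spherical derivative, that $g$ is zero-free, and that the $1$-points of $g$ (limits of the $1$-points of $f_k$, which lie on $\R$) again lie on a line, together with the orientation $g\to0$ upward dictated by Theorem~\ref{thm-BE1}, pins down $g(\zeta)=e^{i\beta\zeta+B}$ with $\beta>0$. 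Then $\rho_k\,(f_k'/f_k)(w_k+\rho_k\zeta)\to g'/g=i\beta$, so $\im(f_k'/f_k)\to+\infty$ near $w_k$.

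I expect the main obstacle to be precisely the gap between $w_k$ and $p_k$: the blow-up center produced by Zalcman's lemma need not lie within $O(\rho_k)$ of the prescribed point $p_k$, so the computation above controls $\im(f_k'/f_k)$ only near $w_k$ and not at $p_k$ itself. Overcoming this requires a \emph{pointed} rescaling, performing the blow-up along the given sequence $p_k$, which is exactly the type of refinement of Zalcman's lemma that underlies the proof of Theorem~\ref{thm-BE1}. With such a version in hand one obtains $\im(f_k'/f_k)(p_k)\to+\infty$, contradicting $\im(f_k'/f_k)(p_k)\le 0$ and establishing the inequality uniformly on a neighborhood of $I$; the remaining steps are then routine.
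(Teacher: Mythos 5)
Your reduction is clean, but it rests entirely on a claim that is both strictly stronger than the proposition and not established by your argument: that $\im(f_k'/f_k)>0$ holds, for large $k$, uniformly on a full neighborhood of the segment $I=L\cap\overline{D}(z_0,r)$. The proposition only asks that the level set $\{|f_k|=1\}$ meet a \emph{single, well-chosen} orthogonal chord exactly once; you are asserting that \emph{every} orthogonal chord over $I$ has this property. The paper neither proves nor needs this global statement. Its proof selects the special $1$-point $a_k=z_k+\rho_kb_k$ produced by the Zalcman rescaling itself, writes $g_k=\exp h_k$ with $h_k(z)=c_k(z-b_k)+\delta_k(z)$ and $c_k\approx\pm 2i$, proves $h_k$ univalent on $D(b_k,2s_k)$ via a Cauchy estimate on $\delta_k'$ and Lemma~\ref{lem1}, controls $\arg\bigl(h_k(z)/(c_k(z-b_k))\bigr)$ by the rotation theorem (Lemma~\ref{lem2}) to get the sign of $\re h_k$ on the vertical segment through $b_k$ of rescaled length $s_k$, and then bridges from height $\rho_ks_k$ to fixed height along that one chord using Landau's theorem. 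Nothing in this controls $f_k'/f_k$ at an arbitrary point of $I$.

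The gap you flag at the end is fatal rather than cosmetic. There is no ``pointed rescaling'' of the kind you invoke: the refinement underlying Theorem~\ref{thm-BE1} is Lemma~\ref{lemma2}, which sharpens the rate at which $R_k\to\infty$, not the location of the centers $z_k$; Zalcman-type arguments intrinsically place the centers where a renormalized spherical derivative is maximal, and the centers cannot be prescribed. Worse, at a point $p_k$ with $\im(f_k'/f_k)(p_k)\le 0$ no rescaling $f_k(p_k+\rho_k\zeta)$ can converge to $e^{i\beta\zeta+B}$ with $\beta>0$, since that would force $\rho_k(f_k'/f_k)(p_k)\to i\beta$ and hence $\im(f_k'/f_k)(p_k)\to+\infty$; so the blow-up you would need at $p_k$ is exactly what the hypothesis of your contradiction argument excludes, and the argument is self-defeating. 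Note also that the hypotheses of Theorem~\ref{thm-BE1} do not obviously forbid $f_k$ from having critical points (with $f_k\neq 0,1$ there) arbitrarily close to $I$, and at such points $\im(f_k'/f_k)=0$; your soft normal-families reasoning gives no mechanism to rule this out, so the uniform inequality cannot be had for free. To complete the proof you should drop the global monotonicity claim and establish the statement only for the $1$-point coming from the rescaling, along the univalence/rotation-theorem/Landau lines sketched above.
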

For large $k$ this yields that $|f_k(z)|>1$ for $z\in M_k\cap D^+\cap \overline{D}(z_0,r)$ and
$|f_k(z)|<1$ for $z\in M_k\cap D^-\cap \overline{D}(z_0,r)$, or vice versa.

\begin{ack}
We thank the referee for helpful comments.
\end{ack}
\section{Preliminaries}
The lemma of Zalcman already mentioned in the introduction is the following.
\begin{lemma}\label{lemma1}
{\bf (Zalcman's Lemma)}
Let $\mathcal{F}$ be a family of functions meromorphic in a domain $D$ in $\C$.
Then $\F$ is not normal at a point $z_0\in D$ if and only if there exist
\begin{enumerate}[label=$(\roman*)$,itemsep=0pt, topsep=5pt]
\item points $z_k\in D$ with $z_k\to z_0$,
\item positive numbers $\rho_k$ with $\rho_k\to 0$,
\item functions $f_k\in \F$
\end{enumerate}
such that
\begin{equation}\label{1a}
f_k(z_k+\varrho_kz)\to g(z)
\end{equation}
locally uniformly in $\C$ with respect to the spherical metric, 
where $g$ is a non-constant meromorphic function in~$\C$.
\end{lemma}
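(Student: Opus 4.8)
The plan is to derive both implications from Marty's theorem, which asserts that $\F$ is normal at a point if and only if the spherical derivatives $f^{\#}=|f'|/(1+|f|^{2})$, $f\in\F$, are uniformly bounded on some neighborhood of that point. Throughout I abbreviate $g_k(\zeta)=f_k(z_k+\varrho_k\zeta)$ and use the scaling identity $g_k^{\#}(\zeta)=\varrho_k\,f_k^{\#}(z_k+\varrho_k\zeta)$, together with the standard fact that spherically locally uniform convergence to a meromorphic limit implies locally uniform convergence of the spherical derivatives.

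The easy implication is that (i)--(iii) and \eqref{1a} force non-normality. Here I would argue as follows. Since $g$ is non-constant we have $g^{\#}\not\equiv 0$, so there is a $\zeta_0$ with $g^{\#}(\zeta_0)>0$. From $g_k\to g$ spherically we get $g_k^{\#}(\zeta_0)\to g^{\#}(\zeta_0)>0$, and the scaling identity then yields $f_k^{\#}(z_k+\varrho_k\zeta_0)=\varrho_k^{-1}g_k^{\#}(\zeta_0)\to\infty$ because $\varrho_k\to 0$. As $z_k+\varrho_k\zeta_0\to z_0$, Marty's theorem shows that $\F$ is not normal at $z_0$.

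The substantive direction is the converse, and the plan is a weighted maximum (renormalization) argument, with one device needed to force $z_k\to z_0$. Since $\F$ fails to be normal at $z_0$, it fails to be normal on each disk $D(z_0,r_j)$ for a sequence $r_j\downarrow 0$ with $\overline{D}(z_0,r_j)\subset D$; by Marty this produces $F_j\in\F$ and $\zeta_j\in D(z_0,r_j/2)$ with $F_j^{\#}(\zeta_j)$ as large as I wish, say $F_j^{\#}(\zeta_j)\ge 2j/r_j$. On $\overline{D}(z_0,r_j)$ I maximize the continuous weight $h_j(z)=(r_j-|z-z_0|)F_j^{\#}(z)$, which vanishes on the bounding circle, at an interior point $z_j$, and I set $M_j=h_j(z_j)$ and $\varrho_j=1/F_j^{\#}(z_j)$. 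Testing maximality against $\zeta_j$ gives $M_j\ge \tfrac12 r_j\,F_j^{\#}(\zeta_j)\ge j\to\infty$, while $z_j\in D(z_0,r_j)$ forces $z_j\to z_0$ automatically. This last point is exactly the role of the shrinking disks, and is the step I expect to be the main obstacle: on a single fixed disk the maximum of the weight need not be located near $z_0$, so a naive renormalization would not deliver centers converging to $z_0$. From $M_j=(r_j-|z_j-z_0|)F_j^{\#}(z_j)$ one reads off $F_j^{\#}(z_j)\ge M_j/r_j\to\infty$, hence $\varrho_j\to 0$, and by construction $g_j^{\#}(0)=\varrho_j F_j^{\#}(z_j)=1$.

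It remains to extract a non-constant limit. The key estimate comes from maximality: for $z=z_j+\varrho_j\zeta$ in the disk one has $F_j^{\#}(z)\le \varrho_j^{-1}(r_j-|z_j-z_0|)/(r_j-|z-z_0|)$, and inserting $|z-z_0|\le|z_j-z_0|+\varrho_j|\zeta|$ together with $(r_j-|z_j-z_0|)/\varrho_j=M_j$ yields the clean bound $g_j^{\#}(\zeta)\le(1-|\zeta|/M_j)^{-1}$; a quick check shows the relevant points $z_j+\varrho_j\zeta$ indeed remain in $\overline{D}(z_0,r_j)$ for $|\zeta|\le S$ once $M_j>S$. Thus on every fixed disk $|\zeta|\le S$ the spherical derivatives $g_j^{\#}$ are eventually bounded by, say, $2$, so by Marty's theorem and a diagonal exhaustion of $\C$ a subsequence of $(g_j)$ converges spherically locally uniformly to a meromorphic $g\colon\C\to\CC$. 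The normalization passes to the limit to give $g^{\#}(0)=1$, whence $g$ is non-constant (in fact $g^{\#}\le 1$ everywhere). Renaming this subsequence, the points $z_j\to z_0$, the scales $\varrho_j\to 0$ and the functions $F_j\in\F$ satisfy \eqref{1a}. The only remaining technical points are that the spherical limit is genuinely meromorphic rather than $\equiv\infty$, which the normalization excludes, and that $g_j^{\#}\to g^{\#}$, both being standard consequences of normal-families theory.
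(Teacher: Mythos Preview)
Your proof is correct and follows essentially the same approach the paper sketches (and attributes to \cite{Zalcman1975,Bergweiler1998,Zalcman1998}): normalize so that $g_k^\#(0)=1$, obtain the bound $g_k^\#(z)\le 1+o(1)$ on disks of radius $R_k\to\infty$ via a weighted-maximum (Zalcman renormalization) argument, and invoke Marty's theorem to extract a non-constant limit. Your explicit bound $g_j^\#(\zeta)\le (1-|\zeta|/M_j)^{-1}$ is exactly the quantitative form of the paper's ``$1+o(1)$'' estimate.
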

In the proof (see also~\cite[Section~4]{Bergweiler1998}
or~\cite[p.~217f]{Zalcman1998} besides~\cite{Zalcman1975})
one considers the spherical derivative 
\begin{equation}\label{1a1}
g_k^\#(z)=\frac{|g_k'(z)|}{1+|g_k(z)|^2}
\end{equation}
of the function $g_k$ defined by
\begin{equation}\label{1b}
g_k(z)=f_k(z_k+\varrho_kz)
\end{equation}
and shows that for suitably chosen $f_k$, $z_k$, $\varrho_k$  and $R_k$
with $R_k\to\infty$ we have $g_k^\#(0)=1$ as well as
\begin{equation}\label{1c}
g_k^\#(z)\leq 1+o(1)
\quad 
\text{for }|z|\leq R_k
\text{ as }k\to\infty.
\end{equation}
Marty's theorem then implies that $(g_k)$ has a locally convergent subsequence.

The following addendum to Lemma~\ref{lemma1} was proved in~\cite[Lemma~2.2]{BE}.
\begin{lemma}\label{lemma2}
Let $t_0>0$ and $\varphi\colon [t_0,\infty)\to (0,\infty)$ be a non-decreasing function such that
$\varphi(t)/t\to 0$ as $t\to\infty$ and
\begin{equation}\label{1d}
\int^\infty_{t_0}\frac{dt}{t\varphi(t)}<\infty.
\end{equation}
Then one may choose $z_k$, $\varrho_k$ and $f_k$ in Zalcman's Lemma~\ref{lemma1} such that
\begin{equation}\label{1d1}
R_k:=\frac 1 {\varrho_k\varphi\!\left(1/\varrho_k\right)}\to\infty
\quad \text{as }k\to\infty.
\end{equation}
and the functions $g_k$ given by \eqref{1b} are defined in the disks $D(0,R_k)$
and satisfy
\begin{equation}\label{1d2}
g_k^\#(z)\leq 1+\frac{|z|}{R_k}\quad\text{for }|z|<R_k.
\end{equation}
\end{lemma}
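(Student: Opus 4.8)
The plan is to deduce the lemma from the existence of a single well‑chosen rescaling centre. Call a point $w$ in the domain of a meromorphic function $f$ \emph{admissible} if $f^\#(w)>0$ and, writing $v=f^\#(w)$,
\[
 f^\#(z)\le v+v\,\varphi(v)\,|z-w|\qquad\text{for all } z\in\overline{D}\!\left(w,1/\varphi(v)\right).
\]
Once an admissible point $z_k=w$ with $v=f_k^\#(z_k)$ is found, I set $\varrho_k=1/f_k^\#(z_k)$, so that $R_k=1/(\varrho_k\varphi(1/\varrho_k))=v/\varphi(v)$ and $\varrho_kR_k=1/\varphi(v)$. Substituting $z=z_k+\varrho_kz'$ with $|z'|\le R_k$ (so that $|z-z_k|=\varrho_k|z'|\le 1/\varphi(v)$) into the admissibility inequality and multiplying by $\varrho_k$ gives, since $\varrho_kv=1$ and $\varrho_k\varphi(v)=1/R_k$,
\[
 g_k^\#(z')=\varrho_kf_k^\#(z_k+\varrho_kz')\le \varrho_kv\bigl(1+\varphi(v)\varrho_k|z'|\bigr)=1+\frac{|z'|}{R_k},
\]
which is exactly \eqref{1d2}, together with $g_k^\#(0)=\varrho_kv=1$. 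Because $\varphi(t)/t\to0$, we have $R_k=v_k/\varphi(v_k)\to\infty$ as soon as $v_k=f_k^\#(z_k)\to\infty$, so \eqref{1d1} holds. Thus everything reduces to producing, for suitable $f_k\in\F$, an admissible point $z_k$ with $f_k^\#(z_k)\to\infty$ and $z_k\to z_0$.

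To produce it I would argue by a selection (``slowly growing point'') procedure. Fix $r_0>0$ with $\overline{D}(z_0,r_0)\subset D$ and a sequence $r_k\downarrow0$. Since $\F$ is not normal at $z_0$, Marty's theorem lets me choose $f_k\in\F$ with $a_0:=\max_{\overline{D}(z_0,r_k)}f_k^\#\to\infty$; let $w_0\in\overline{D}(z_0,r_k)$ attain this maximum. Given $w_n$ with $a_n:=f_k^\#(w_n)$, I stop if $w_n$ is admissible. Otherwise the admissibility inequality fails at some $z$, and by the intermediate value theorem along the segment $[w_n,z]$ I select a \emph{violation point} $w_{n+1}$ with $a_n<a_{n+1}:=f_k^\#(w_{n+1})$ and
\[
 |w_{n+1}-w_n|\le\frac{a_{n+1}-a_n}{a_n\varphi(a_n)}.
\]
The IVT lets me cap the step so that $a_{n+1}\le2a_n$ and $\varphi(a_{n+1})\le2\varphi(a_n)$ at each stage. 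The sequence $(a_n)$ is strictly increasing and bounded (by $\max_{\overline{D}(z_0,r_0)}f_k^\#$), hence convergent. If the process stops I obtain an admissible point; if it runs forever, then $a_n\to a_\infty$, the displacement bound forces $(w_n)$ to converge to some $w_\infty$, and since the caps become inactive once $a_n$ is close to $a_\infty$, passing to the limit shows that the maximum of $f_k^\#$ over $\overline{D}(w_\infty,1/\varphi(a_\infty))$ equals $a_\infty=f_k^\#(w_\infty)$, so $w_\infty$ is admissible. Either way $z_k$ is the resulting point, with $f_k^\#(z_k)\ge a_0\to\infty$.

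The crux, and the only place where hypothesis \eqref{1d} is genuinely used, is to show that the total displacement is small, so that $z_k$ stays near $z_0$. Summing the displacement bound and using that $t\mapsto t\varphi(t)$ is increasing while, by the geometric control, $a_{n+1}\varphi(a_{n+1})\le 4\,a_n\varphi(a_n)$, each term is comparable to the corresponding piece of the integral,
\[
 \frac{a_{n+1}-a_n}{a_n\varphi(a_n)}\le 4\int_{a_n}^{a_{n+1}}\frac{dt}{t\varphi(t)},
\]
so that $\sum_n|w_{n+1}-w_n|\le 4\int_{a_0}^{\infty}dt/(t\varphi(t))$, which tends to $0$ as $a_0\to\infty$ precisely because the integral in \eqref{1d} converges. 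Consequently $|z_k-z_0|\le r_k+4\int_{a_0}^{\infty}dt/(t\varphi(t))\to0$, whence $z_k\to z_0$, $\varrho_k=1/f_k^\#(z_k)\to0$, and the admissibility disk $\overline{D}\!\left(z_k,1/\varphi(f_k^\#(z_k))\right)$ lies in $D$ for large $k$, which is what the final bound required.

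The main obstacle I anticipate is exactly this displacement estimate. The naive ``jump to the nearby maximum'' selection can take a single enormous step, for which $\frac{a_{n+1}-a_n}{a_n\varphi(a_n)}$ dwarfs $\int_{a_n}^{a_{n+1}}dt/(t\varphi(t))$ and destroys the comparison; the geometric capping that bounds the growth of both $a_n$ and $\varphi(a_n)$ per step is what must be engineered to convert convergence of the integral into smallness of the total displacement (a minor separate point is the treatment of jumps of $\varphi$, handled by making the crossing step negligibly short). Everything else is the routine substitution recorded in the first paragraph and the standard passage, via Marty's theorem as in Lemma~\ref{lemma1}, from the bound \eqref{1d2} to a convergent subsequence.
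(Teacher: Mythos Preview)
The paper does not actually prove Lemma~\ref{lemma2}: it is quoted from \cite[Lemma~2.2]{BE} and used as a black box, so there is no proof here to compare against. That said, your outline is precisely the ``point of slow variation'' selection that underlies such refinements of Zalcman's lemma, and the reduction in your first paragraph --- from an admissible point $w$ with $v=f_k^\#(w)$ to the bound \eqref{1d2} together with $g_k^\#(0)=1$ and $R_k=v/\varphi(v)\to\infty$ --- is clean and correct.

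The one place where the sketch is genuinely incomplete is the capping. Taking the first crossing of the level $2a_n$ along the segment $[w_n,z]$ need not yield a violation point: the inequality $f_k^\#>a_n+a_n\varphi(a_n)\,|\,\cdot-w_n|$ is known only at the endpoint $z$, not along the way, so the displacement bound can fail at that crossing. The fix is to apply the intermediate value theorem not to $f_k^\#$ but to the excess $\psi(s)=f_k^\#(\gamma(s))-a_n-a_n\varphi(a_n)\,s\,|z-w_n|$ along $\gamma(s)=w_n+s(z-w_n)$: at the left endpoint of the component of $\{\psi>0\}$ containing $s=1$ one has $\psi=0$, hence $f_k^\#\le 2a_n$ there, and immediately to the right one obtains a genuine violation point with $a_{n+1}$ as close to $2a_n$ as desired. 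The additional doubling cap on $\varphi$ is then automatic for continuous $\varphi$ (cap further at the first $a>a_n$ with $\varphi(a)=2\varphi(a_n)$ and repeat the same excess argument); your remark that jumps of $\varphi$ are handled by a negligibly short crossing step is correct in spirit but must be written out, or one simply replaces $\varphi$ at the outset by a continuous minorant with the same convergent integral. Finally, a short bootstrap is needed to guarantee that the iterates stay inside a fixed compact disk before the displacement sum has been controlled, but this follows by running the selection inside $\overline D(z_0,r_0/2)$ and stopping (with a contradiction) if the boundary is reached.
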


The next lemma is standard~\cite[Proposition~1.10]{Pommerenke1992}.
\begin{lemma}\label{lem1}
Let $\Omega$ be a convex domain and let $f\colon \Omega\to\C$ be holomorphic.
If $\re f'(z)>0$ for all $z\in\Omega$, then $f$ is univalent.
\end{lemma}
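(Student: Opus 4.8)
The plan is to prove univalence directly by showing that $f(z_1)\neq f(z_2)$ whenever $z_1,z_2\in\Omega$ are distinct. The structural fact I will exploit is convexity: it guarantees that the straight segment joining any two points of $\Omega$ stays inside $\Omega$, so I may integrate $f'$ along this segment without ever leaving the region where the hypothesis $\re f'>0$ holds. This reduces a global injectivity statement to a one-dimensional computation.

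Concretely, I fix distinct $z_1,z_2\in\Omega$ and parametrize the segment by $\gamma(t)=z_1+t(z_2-z_1)$ for $t\in[0,1]$, which lies in $\Omega$ by convexity. Applying the fundamental theorem of calculus along $\gamma$ gives
\[
f(z_2)-f(z_1)=\int_0^1 f'(\gamma(t))\,\gamma'(t)\,dt=(z_2-z_1)\int_0^1 f'(\gamma(t))\,dt.
\]
Since $z_2-z_1\neq 0$, it suffices to show that the integral $I:=\int_0^1 f'(\gamma(t))\,dt$ is nonzero. This is exactly where the hypothesis enters: taking real parts and using that the real part commutes with integration,
\[
\re I=\int_0^1 \re f'(\gamma(t))\,dt>0,
\]
because the integrand is continuous and strictly positive throughout $[0,1]$. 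Hence $I\neq 0$, and therefore $f(z_2)-f(z_1)=(z_2-z_1)I\neq 0$, proving injectivity.

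There is essentially no serious obstacle here; this is a classical argument and the only points requiring a word of justification are that $\re$ passes inside the integral and that a continuous strictly positive integrand yields a strictly positive integral, both of which are immediate. The conceptual content lies entirely in recognizing that convexity lets us replace the ray-like segment $[z_1,z_2]$ inside $\Omega$ and that the pointwise sign condition on $\re f'$ forces the averaged derivative $I$ to have positive real part, hence to be nonzero. One should note that convexity is genuinely needed: without it the connecting path might leave $\Omega$, and then the integrand could fail to have controlled real part.
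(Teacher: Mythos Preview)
Your argument is correct and is precisely the classical proof of this fact. The paper does not supply its own proof but simply cites \cite[Proposition~1.10]{Pommerenke1992}, where this same segment-integration argument is given.
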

The following result can be found in~\cite[p.~112]{Goluzin}.
\begin{lemma}\label{lem2}
Let $a\in\C$, $r>0$ and let $f\colon D(a,r)\to \C$ be univalent. Then
\begin{equation}\label{9i}
\left|\arg\!\left( \frac{f(z)-f(a)}{f'(a)(z-a)}\right)\right|\leq \log\frac{\displaystyle 1+\frac{|z-a|}{r}}{\displaystyle 1-\frac{|z-a|}{r}}
\end{equation}
for $z\in D(a,r)$.
\end{lemma}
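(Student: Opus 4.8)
The plan is to strip off the affine data, reduce to a normalized univalent function on $\D$, and then integrate a pointwise bound for the logarithmic derivative along a radius. First I would set
\[
g(w)=\frac{f(a+rw)-f(a)}{r\,f'(a)},\qquad w\in\D .
\]
Since $f$ is univalent we have $f'(a)\neq 0$, and $g$ is univalent on $\D$ with $g(0)=0$ and $g'(0)=1$; thus $g$ lies in the usual normalized class of univalent functions. Putting $z=a+rw$, so that $w=(z-a)/r$ and $|w|=|z-a|/r<1$, one checks immediately that
\[
\frac{f(z)-f(a)}{f'(a)(z-a)}=\frac{g(w)}{w}.
\]
Hence \eqref{9i} is equivalent to the assertion that $\bigl|\arg(g(w)/w)\bigr|\le\log\frac{1+|w|}{1-|w|}$ for every such $g$ and every $w\in\D$.

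Since $g$ vanishes on $\D$ only at the origin, and there only to first order, the quotient $g(\zeta)/\zeta$ is holomorphic and zero-free on $\D$ with value $1$ at the origin. Therefore $\log(g(\zeta)/\zeta)$ has a holomorphic branch vanishing at $0$, and $\arg(g(w)/w)=\im\log(g(w)/w)$. I would compute this by integrating the derivative $g'(\zeta)/g(\zeta)-1/\zeta$ of that branch along the segment $\zeta=tw$, $t\in[0,1]$. After the substitution $d\zeta=w\,dt$ the integrand becomes $t^{-1}\bigl(\zeta g'(\zeta)/g(\zeta)-1\bigr)$, so taking imaginary parts gives
\[
\arg\frac{g(w)}{w}=\int_0^1\frac1t\,\im\!\left(\frac{\zeta g'(\zeta)}{g(\zeta)}\right)dt,\qquad\text{where }\zeta=tw .
\]

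The decisive ingredient, and the step I expect to carry the real weight, is the classical disk estimate for the logarithmic derivative of a normalized univalent function \cite{Goluzin}: for every such $g$ and every $\zeta\in\D$,
\[
\left|\frac{\zeta g'(\zeta)}{g(\zeta)}-\frac{1+|\zeta|^2}{1-|\zeta|^2}\right|\le\frac{2|\zeta|}{1-|\zeta|^2},
\]
the Koebe functions being extremal. That the centre of this disk is real (a reflection of the rotation and conjugation invariance of the class) is exactly what is needed: it yields $\bigl|\im(\zeta g'(\zeta)/g(\zeta))\bigr|\le 2|\zeta|/(1-|\zeta|^2)$. Substituting $|\zeta|=t|w|$ into the integral above and evaluating,
\[
\left|\arg\frac{g(w)}{w}\right|\le\int_0^1\frac{2|w|}{1-t^2|w|^2}\,dt=\log\frac{1+|w|}{1-|w|},
\]
which is precisely the bound \eqref{9i} after undoing the normalization.
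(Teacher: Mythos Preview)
Your argument is correct. The paper does not actually prove this lemma: it simply cites the normalized case $a=0$, $r=1$, $f(0)=0$, $f'(0)=1$ from \cite{Goluzin} and remarks that the general statement follows by the affine rescaling you carry out in your first paragraph. Your normalization step is exactly that reduction, and your subsequent derivation of the bound for $g\in S$ via the disk estimate for $\zeta g'(\zeta)/g(\zeta)$ is the standard route to the inequality (and is in fact how it is obtained in \cite{Goluzin}). So you are not taking a different approach; you are supplying the details behind the citation.
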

The result is stated in~\cite{Goluzin} for the special case that $a=0$, $r=1$, $f(0)=0$ and $f'(0)=1$, 
but the version given above follows directly from this special case.

\begin{proof}[Proof of Proposition~\ref{prop1}]
We recall some arguments of the proof of Theorem~\ref{thm-BE1} in~\cite{BE} and then describe the additional
arguments that have to be made.

As in \cite{BE} we may assume that $L=\R$ and
we use Zalcman's Lemma~\ref{lemma1} as well as Lemma~\ref{lemma2}, applied with $\varphi(t)=(\log t)^2$,
 to obtain a sequence $(z_k)$ tending to $z_0$ and
a sequence $(\rho_k)$ tending to $0$ such that 
\begin{equation}\label{1d9}
R_k
:=\frac 1 {\varrho_k\left(\log \varrho_k\right)^2}\to\infty,
\end{equation}
and the function $g_k$ given by~\eqref{1b}
is defined in the disk $D(0,R_k)$ and satisfies~\eqref{1d2} and $g_k^\#(0)=1$.

As in \cite[Proof of Theorem~1.3]{BE} we find a sequence $(b_k)$ of $1$-points of $g$ such that
\begin{equation}\label{2d}
g_k(z)=\exp\!\left(c_k(z-b_k)+\delta_k(z)\right),
\end{equation}
where (see \cite[(3.4) and (3.5)]{BE})
\begin{equation}\label{3k}
|c_k+2i|\leq\frac{C}{R_k}
\quad\text{or}\quad
|c_k-2i|\leq\frac{C}{R_k}
\end{equation}
with some constant~$C$
and (see \cite[(2.22)]{BE})
\begin{equation}\label{2f}
|\delta_k(z)|\leq 2^{7}\frac{|z-b_k|^2}{R_k}\quad\text{for }|z-b_k|\leq\frac{1}{16}R_k.
\end{equation}
Without loss of generality we may assume that the first alternative holds in~\eqref{3k}.

We put 
\begin{equation}\label{2d1}
h_k(z)=c_k(z-b_k)+\delta_k(z)
\end{equation}
so that $g_k(z)=\exp h_k(z)$.
We will show that $h_k$ is univalent in $D(b_k,2s_k)$ where $s_k=2^{-11}R_k$.
In order to do so we note that for $|z-b_k|\leq 2s_k$ we have
\begin{align}\label{9a}
|\delta_k'(z)|
& = \frac{1}{2\pi}\left|
\int_{|\zeta-b_k|=4s_k} \frac{\delta_k(\zeta)}{(z-\zeta)^2}d\zeta \right|
\leq 4 s_k\frac{1}{(2s_k)^2}\max_{|\zeta-b_k|=4s_k}|\delta_k(\zeta)|.
\end{align}
Since $4s_k=2^{-9}R_k<R_k/16$ we may apply~\eqref{2f} to estimate the maximum 
on the right hand side
and obtain
\begin{align}\label{9a1}
|\delta_k'(z)|
\leq \frac{1}{s_k}2^7 \frac{(4s_k)^2}{R_k}
=1
\quad\text{for}\ |z-b_k|\leq 2 s_k.
\end{align}
Thus, since we assumed that the first alternative holds in~\eqref{3k},
\begin{equation}\label{9b}
\re (ih_k'(z)) =\re (ic_k+i\delta_k'(z))\geq 2-\frac{C}{R_k} -1>0
\end{equation}
for $z\in D(b_k,2s_k)$ if $k$ is sufficiently large. Lemma~\ref{lem1}
 implies that $ih_k$ and hence $h_k$ are univalent in this disk.
Since $h_k(b_k)=0$ and, by~\eqref{2f}, $\delta_k'(b_k)=0$ and thus
$h_k'(b_k)=c_k$, Lemma~\ref{lem2} now yields that if $z\in \overline{D}(b_k,s_k)$, then
\begin{equation}\label{9c}
\left|\arg\!\left( \frac{h_k(z)}{c_k(z-b_k)}\right)\right|
\leq \log 3.
\end{equation}
For $t\in\R$ with $0<|t|\leq s_k$ we thus have 
\begin{equation}\label{9d}
\left|\arg\!\left( \frac{h_k(b_k+it)}{ic_kt}\right)\right| 
\leq \log 3.
\end{equation}
Since we assumed that the first alternative holds in~\eqref{3k},
this implies for large $k$ that 
\begin{equation}\label{9e}
\left|\arg\!\left( h_k(b_k+it)\right)\right| 
\leq \log 3+\arcsin\!\left(\frac{C}{2R_k}\right)< \frac12\pi
\quad\text{for}\ 0<t\leq s_k
\end{equation}
while
\begin{equation}\label{9f}
\left|\arg\!\left( h_k(b_k+it)\right)-\pi\right| < \frac12\pi
\quad\text{for}\  -s_k\leq t<0.
\end{equation}
Hence
\begin{equation}\label{9g}
\re\!\left( h_k(b_k+it)\right)
\begin{cases} 
>0 & \text{if} \  0<t\leq s_k,\\
<0 & \text{if} \   -s_k\leq t<0,
\end{cases}
\end{equation}
so that
\begin{equation}\label{9h}
| g_k(b_k+it)|
=\exp\!\left(\re\!\left( h_k(b_k+it)\right)\right)
\begin{cases} 
>1 & \text{if} \  0<t\leq s_k,\\
<1 & \text{if} \   -s_k\leq t<0.
\end{cases}
\end{equation}
As in~\cite[(3.2), (3.6) and (3.7)]{BE} we put
\[
a_k=z_k+\rho_k b_k,
\quad
u_k=b_k+is_k=b_k+i2^{-11}R_k
\quad\text{and}\quad
\alpha_k=z_k+\rho_k u_k.
\]
By~\eqref{1b} and \eqref{9h} we have $|f_k(z)|>1$ for $z$ in the line segment $(a_k,\alpha_k]$.
Choose $d>r$ such that $\overline{D}(z_0,d)\in D$.
We put $\beta_k=z_k+id$. Then $\beta_k\in D^+$ for large $k$ and as in~\cite{BE} we can use 
Landau's theorem to show that we also have $|f_k(z)|>1$ for $z\in [\alpha_k,\beta_k]$.
Altogether thus $|f_k(z)|>1$ for $z\in (a_k,\beta_k]$ and hence for $z\in M_k\cap D^+\cap \overline{D}(z_0,r)$ and large~$k$.
Analogously, $|f_k(z)|<1$ for $z\in M_k\cap D^-\cap \overline{D}(z_0,r)$ and large~$k$.
\end{proof}
\begin{lemma}\label{lem3}
Let $0<\alpha<\pi$ and $\alpha<\beta<2\pi-\alpha$. Let $u\colon \D\to[-\infty,\infty)$ 
be a subharmonic function which is harmonic in $\D\setminus \{re^{i\beta}\colon 0\leq r<1\}$.
Suppose that $u(z)>0$ for $|\arg z|<\alpha$ while $u(z)\leq 0$ for 
$\alpha\leq |\arg z|\leq \pi$.
Then $\alpha\geq \pi/2$. Moreover, if $\alpha>\pi/2$, then $\beta=\pi$.
In addition, if $u$ is harmonic in $\D\setminus \{0\}$, then $\alpha=\pi/2$.
\end{lemma}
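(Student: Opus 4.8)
The plan is to analyze $u$ near the origin through its homogeneous leading term (the Phragmén--Lindelöf indicator), and to convert the three hypotheses --- subharmonicity, the sign pattern, and harmonicity off the single ray $\{re^{i\beta}\}$ --- into a matching problem for an ODE on the circle. First I would fix the candidate order $\lambda:=\pi/(2\alpha)$ and show that in the closed cone $\{|\arg z|\le\alpha\}$ one has $u(z)=c\,r^{\lambda}\cos(\lambda\theta)+o(r^{\lambda})$ as $r\to0$, with $c>0$ (here $r=|z|$, $\theta=\arg z$). This comes from the conformal change $w=z^{\lambda}$, which maps the cone $\{|\arg z|<\alpha\}$ onto $\{\re w>0\}$ and the edges $\arg z=\pm\alpha$ onto the imaginary axis; then $\widetilde u(w):=u(z)$ is positive harmonic near $0$ with boundary value $0$ on the axis, so Schwarz reflection extends it across $0$ with $\widetilde u(w)=a\,\re w+O(|w|^{2})$, and Hopf's lemma gives $a=c>0$. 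Moreover $u$ carries no point mass at $0$, since a positive mass there would force $u\to-\infty$, contradicting $u>0$ near $0$ in the cone. Hence the indicator $h(\theta):=\lim_{r\to0}r^{-\lambda}u(re^{i\theta})$ exists, is continuous, and equals $c\cos(\lambda\theta)$ on $[-\alpha,\alpha]$; in particular $h(\pm\alpha)=0$, $h'(\alpha)=-c\lambda$, $h'(-\alpha)=c\lambda$.

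Next I would determine $h$ on the complementary arc $[\alpha,2\pi-\alpha]$. Since $u$ is harmonic off the single ray $\beta$, the indicator solves $h''+\lambda^{2}h=0$ there except at $\theta=\beta$, where trigonometric convexity of the indicator of a subharmonic function produces a convex corner, $h'(\beta^{+})\ge h'(\beta^{-})$. Solving from the two ends with the Cauchy data above yields $h(\theta)=-c\sin(\lambda(\theta-\alpha))$ on $[\alpha,\beta]$ and $h(\theta)=c\sin(\lambda(\theta-2\pi+\alpha))$ on $[\beta,2\pi-\alpha]$. Imposing (i) $h\le0$ on the arc, (ii) continuity of $h$ at $\beta$, and (iii) the corner inequality then gives, using $\lambda\alpha=\pi/2$, three explicit conditions: (i) forces $\lambda(\beta-\alpha)\le\pi$ and $\lambda(2\pi-\alpha-\beta)\le\pi$; (ii) forces $\cos(\lambda\beta)=\cos(\lambda\beta-2\pi\lambda)$; and (iii) becomes $\cos(\lambda(\beta-\pi))\sin(\lambda\pi)\ge0$.

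From these I would read off all three conclusions. Adding the two inequalities in (i) gives $\lambda(\pi-\alpha)\le\pi$, i.e.\ $\alpha\ge\pi/3$. For $\alpha\in[\pi/3,\pi/2)$ the solutions of (ii) in $(\alpha,2\pi-\alpha)$ are $\beta=\pi+2k\alpha$, and only $\beta=\pi$ lies in range; then (iii) reduces to $\sin(\lambda\pi)\ge0$ with $\lambda\pi\in(\pi,3\pi/2]$, which is false. Hence $\alpha\ge\pi/2$, the first assertion. If $\alpha>\pi/2$ then $\lambda<1$, and again $\beta=\pi$ is the unique solution of (ii) in range, giving the second assertion. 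Finally, if $u$ is harmonic in $\D\setminus\{0\}$ there is no corner, so $h$ is one smooth solution on all of $[\alpha,2\pi-\alpha]$; matching the Cauchy data at \emph{both} ends together with $h\le0$ forces $\lambda(\pi-\alpha)=\pi/2=\lambda\alpha$, hence $\alpha=\pi/2$.

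I expect the main obstacle to be the rigorous justification of the first step: proving that the indicator $h$ exists as a genuine limit, is continuous across the edges $\theta=\pm\alpha$ and across $\theta=\pm\pi$, and inherits the differential equation together with trigonometric convexity carrying the correct sign of the corner at $\beta$. Equivalently, one must control the blow-up $u(tz)/t^{\lambda}$ not only inside the cone (where the conformal map handles it) but throughout the region $\{u\le0\}$, propagating the Cauchy data across the edges and using the single-ray hypothesis to confine all non-smoothness of $h$ to the one angle $\beta$; the sign pattern of $u$ and the positivity of the Riesz mass on $\{re^{i\beta}\}$ are exactly what make condition~(iii) hold with the stated sign.
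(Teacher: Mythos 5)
Your first step (the conformal map $w=z^{\lambda}$ with $\lambda=\pi/(2\alpha)$, Schwarz reflection across the imaginary axis, and positivity of the leading coefficient) is exactly how the paper's proof begins, and your final numerology is equivalent to the paper's: continuity at $\beta$ gives $\cos(\lambda\beta)=\cos(\lambda(\beta-2\pi))$, hence $\beta=\pi$ when $\lambda\neq1$, and $\sin(\lambda\pi)\geq 0$ then forces $\lambda\leq 1$, i.e.\ $\alpha\geq\pi/2$. (Note that your formulas $-c\sin(\lambda(\theta-\alpha))$ and $c\sin(\lambda(\theta-2\pi+\alpha))$ are just $c\cos(\lambda\theta)$ and $c\cos(\lambda(\theta-2\pi))$, so your matching conditions coincide with the paper's.)

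The genuine gap is the step you yourself flag as the main obstacle: you never establish that the indicator $h(\theta)=\lim_{r\to0}r^{-\lambda}u(re^{i\theta})$ exists on the complementary arc, is continuous at $\beta$, satisfies $h''+\lambda^{2}h=0$ off $\beta$, and has a convex corner of the correct sign at $\beta$. For a function that is merely subharmonic these properties are not automatic (the limit need not exist pointwise, and relating the jump of $h'$ at $\beta$ to the nonnegativity of the Riesz measure requires knowing that the mass on the ray scales like $r^{\lambda-1}\,dr$ near the origin), so as written the proposal reduces the lemma to an unproved package of claims, and the general Phragm\'en--Lindel\"of indicator machinery you invoke is harder to justify locally at a point than the lemma itself. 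The paper closes precisely this gap by a more direct device: since $u$ is harmonic on the whole slit disk $\D\setminus\{re^{i\beta}\colon 0\leq r<1\}$ and agrees with $\re f(z^{\delta})$ (where $\delta=\lambda$) on the open cone, the identity theorem for harmonic functions gives $u=\re f(z^{\delta})$ on the entire slit disk, whence $u(re^{i\theta})=a_{1}r^{\delta}\cos(\delta\theta)+\O(r^{2\delta})$ uniformly for $\beta-2\pi<\theta<\beta$ with no indicator theory needed; continuity across the slit then follows from non-thinness of the ray \cite[Theorem 3.8.3]{Ransford1995}, and the inequality $\sin(\delta\pi)\geq 0$ is obtained not from a corner condition but from the elementary sub-mean-value inequality $0=u(0)\leq\frac{1}{2\pi}\int_{\beta-2\pi}^{\beta}u(re^{i\theta})\,d\theta$. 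If you replace your indicator construction by this harmonic-continuation argument, the rest of your computation goes through and the corner inequality becomes unnecessary.
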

\begin{proof}
Let $\gamma=2\alpha/\pi$ and define $v\colon \{z \in \D\colon \re z\geq 0\}\to [-\infty,\infty)$ by
$v(z)=u(z^\gamma)$. Then $v(z)>0$ for $\re z>0$ while $v(z)\leq 0$ for $\re z=0$.
In fact, $v(z)=0$ for $\re z=0$ by upper semicontinuity.
We have $v=\re f$ for some function $f$ holomorphic in $\{z \in \D\colon \re z> 0\}$.
By the Schwarz reflection principle $f$ extends to a function holomorphic in $\D$.
Hence $f$ has a power series expansion $f(z)=\sum_{k=0}^\infty a_kz^k$ convergent in $\D$.
With $\delta=1/\gamma$ we thus have
\begin{equation}\label{9k}
u(z)=v(z^{1/\gamma})=\re f(z^{\delta})=\re\!\left(\sum_{k=0}^\infty a_kz^{k\delta}
\right)
\end{equation}
for $z\in \D\setminus \{re^{i\beta}\colon 0\leq r<1\}$, meaning that
\begin{equation}\label{9l}
u(re^{i\theta})=\re\!\left(\sum_{k=0}^\infty a_kr^{\delta} e^{ik\delta\theta} \right)
\end{equation}
for $0<r<1$ and $\beta-2\pi<\theta<\beta$.

Since $\re f(z)=v(z)>0$ for $\re z>0$ and $\re f(z)=0$ for $\re z=0$ we find that
$\re a_0=0$ and $a_1>0$.
It follows that 
\begin{equation}\label{9m}
u(re^{i\theta})=a_1r^{\delta} \cos(\delta\theta) +\O(r^{2\delta})
\end{equation}
as $r\to 0$, uniformly for $\beta-2\pi<\theta<\beta$.
We may assume that $\beta\geq \pi$. The condition that 
$u(re^{i\theta})\leq 0$ for $\alpha\leq \theta<\beta$ then implies that 
$\delta\pi\leq \delta\beta \leq 3\pi/2$ so that $\delta\leq 3/2$.
Suppose that $\delta\neq 1$.
Since $u$ is subharmonic and a connected set containing more than one point is non-thin
at every point of its closure \cite[Theorem 3.8.3]{Ransford1995}, 
we have
$u(re^{i\beta})=u(re^{i(\beta-2\pi)})$ and
thus $\cos(\delta\beta)=\cos(\delta(\beta-2\pi))$. This yields that
$\beta=\pi$.
Since $u$ is subharmonic,
we also have
\begin{align}\label{9n}
0=u(0)&\leq \frac{1}{2\pi}\int_{\beta-2\pi}^\beta u(re^{i\theta})d\theta
\\ &
=a_1r^{\delta}\frac{1}{2\pi}\int_{-\pi}^\pi  \cos(\delta\theta)d\theta +\O(r^{2\delta})
\\ &
= a_1r^{\delta}\frac{1}{\delta\pi}\sin (\delta\pi) +\O(r^{2\delta}).
\end{align}
Hence $\sin (\delta\pi)\geq 0$.
Since $\delta\leq 3/2$ and since we assumed that $\delta\neq 1$
this implies that $\delta<1$. Overall thus $\delta\leq 1$ so that  
$\alpha=\gamma\pi/2=\pi/(2\delta)\geq \pi/2$,
and if $\alpha>\pi/2$ so that $\delta<1$, then $\beta=\pi$.
Finally, $u$ can be harmonic only if $\delta=1$, which means that $\alpha=\pi/2$.
\end{proof}
For a bounded domain $G$, a point $z\in G$ and a compact subset $A$ of $\partial G$ let
$\omega(z,A,G)$ denote the harmonic measure of $A$ at a point $z\in G$; see, e.g., \cite[\S 4.3]{Ransford1995}.
It is the solution of the Dirichlet problem for the characteristic function $\chi_A$ of $A$ on 
the boundary of~$G$. Thus
\begin{equation}\label{9j}
\omega(z,A,G)=\sup_u u(z),
\end{equation}
where the supremum is taken over all functions $u$ subharmonic
in $G$ which satisfy 
$\limsup_{z\to\zeta}u(z) \leq \chi_A(\zeta)$ for all $\zeta\in\partial G$.
\begin{lemma}\label{lem4}
Let $G$ and $H$ be bounded domains and let $A\subset\partial G$ and $B\subset \partial H$ be compact.
If $G\subset H$ and 
$A\supset \partial G\cap (H\cup B)$, then
$\omega(z,A,G)\geq \omega(z,B,H)$
for all $z\in G$.
\end{lemma}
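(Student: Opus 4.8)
The plan is to use the Perron-type description of harmonic measure recorded in \eqref{9j} and to show that every competing subharmonic function for $\omega(\,\cdot\,,B,H)$ restricts to a competing function for $\omega(\,\cdot\,,A,G)$. Concretely, let $u$ be any function subharmonic in $H$ with $\limsup_{w\to\eta}u(w)\leq\chi_B(\eta)$ for all $\eta\in\partial H$. Since $\chi_B\leq 1$, the maximum principle for subharmonic functions on the bounded domain $H$ gives $u\leq 1$ throughout $H$. Because $G\subset H$, the restriction $u|_G$ is subharmonic in $G$, so the only thing to check is the boundary inequality $\limsup_{z\to\zeta}u(z)\leq\chi_A(\zeta)$ for every $\zeta\in\partial G$. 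Once that is in hand, \eqref{9j} yields $u(z)\leq\omega(z,A,G)$ for all $z\in G$, and taking the supremum over all admissible $u$ gives $\omega(z,B,H)\leq\omega(z,A,G)$.

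The heart of the argument, and the step where the hypothesis $A\supset\partial G\cap(H\cup B)$ is used, is the boundary case analysis. Here I would first note that $G\subset H$ forces $\partial G\subset\overline{G}\subset\overline{H}=H\cup\partial H$, so each $\zeta\in\partial G$ lies either in $H$ or in $\partial H$; these two cases exhaust $\partial G$. For $\zeta\in H$ the point is interior to $H$, so upper semicontinuity together with $u\leq 1$ gives $\limsup_{z\to\zeta}u(z)\leq u(\zeta)\leq 1$, while $\zeta\in\partial G\cap H\subset A$ forces $\chi_A(\zeta)=1$. For $\zeta\in\partial H$, approaching within $G\subset H$ gives $\limsup_{z\to\zeta,\,z\in G}u(z)\leq\chi_B(\zeta)$; if $\zeta\notin B$ this bound is $0\leq\chi_A(\zeta)$, and if $\zeta\in B$ then $\zeta\in\partial G\cap B\subset A$, so $\chi_A(\zeta)=1\geq\chi_B(\zeta)$. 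In every case the required inequality holds.

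I expect the only delicate point to be the bookkeeping in the case distinction: verifying that the inclusion $A\supset\partial G\cap(H\cup B)$ covers precisely the boundary points of $G$ at which $\chi_A$ must equal $1$ (those lying in $H$ or in $B$), while the remaining points of $\partial G$ lie in $\partial H\setminus B$, where the competitor $u$ is already forced below $0$. No substantive obstacle should arise; the statement is essentially a monotonicity principle, obtained by simultaneously shrinking the domain and enlarging the admissible part of the boundary, and the proof is a direct comparison of the two Perron families.
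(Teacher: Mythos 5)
Your argument is correct, and it rests on the same foundation as the paper's proof, namely the Perron description \eqref{9j} of harmonic measure; the difference is where the comparison is made. You show that \emph{every} competitor $u$ for $\omega(\cdot,B,H)$ restricts to a competitor for $\omega(\cdot,A,G)$ and then take suprema, whereas the paper inserts the single function $u=\omega(\cdot,B,H)$ into the Perron family for $(A,G)$. To justify the latter one needs that $\omega(z,B,H)\to 0$ as $z\to\zeta$ for $\zeta\in\partial H\setminus B$, which is exactly how the paper disposes of the points of $\partial G\setminus A$; strictly speaking this boundary limit is guaranteed only at regular boundary points of $H$ (immaterial for the concrete domains $G_k$ and $H$ to which the lemma is later applied, but a genuine caveat for arbitrary bounded domains). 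Your version trades that fact for the maximum-principle bound $u\leq 1$ on $H$ together with the defining $\limsup$ inequality of each competitor, so it never invokes boundary behaviour of the harmonic measure itself and is marginally more self-contained. The case analysis $\partial G\subset H\cup\partial H$ and the role of the hypothesis $A\supset\partial G\cap(H\cup B)$ are identical in the two arguments, and all the individual steps you give (the bound $u\leq 1$, upper semicontinuity at interior points of $H$, and the two subcases on $\partial H$) check out.
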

\begin{proof}
Let $\zeta\in \partial G\setminus A$. Then $\zeta\in \partial G\setminus (H\cup B)$
and thus $\zeta\in\partial H\setminus B$. Hence
$\lim_{z\to\zeta}\omega(z,B,H)=0$.
We conclude that 
$\limsup_{z\to\zeta}\omega(z,B,H)\leq \chi_A(\zeta)$ for all $\zeta\in\partial G$.
Since $u(z)=\omega(z,B,H)$ is an admissible choice in~\eqref{9j}, the conclusion follows.
\end{proof}

\section{Proof of Theorem~\ref{thm1}}
Without loss of generality we may assume that $L_1$ and $L_{-1}$ are symmetric 
with respect to the real axis and that $L_1$ is in the upper half-plane.
Thus $L_{\pm 1}=\{re^{\pm i\alpha}\colon r\geq 0\}$
for some $\alpha\in (0,\pi)$. We may also assume that 
$L_{0}=\{re^{i\beta}\colon r\geq 0\}$ where $\alpha<\beta<2\pi-\alpha$.
We define
\begin{align}\label{defS}
S&=\{re^{i\theta} \colon 0<r<1,\; |\theta|<\alpha \},\\ 
S^+
& =\{re^{i\theta} \colon 0<r<1,\; \alpha< \theta<\beta \},\\ 
S^-
& =\{re^{i\theta} \colon 0<r<1,\; \beta< \theta<2\pi-\alpha \}.
\end{align}

By Montel's theorem, $\F$ is normal in $\D\setminus (L_1\cup L_0\cup L_{-1})$.
Thus we only have to prove that $\F$ is normal on $\D\cap L_j\setminus \{0\}$ for
$j\in\{0,\pm 1\}$.

First we prove that $\F$ is normal on $\D\cap L_0\setminus \{0\}$.
In order to do so, suppose that $\F$ is not normal at some point
$z_0\in L_0\setminus \{0\}$.
Applying Theorem~\ref{thm-BE1} to the family $\{1-f\colon f\in \F\}$ we see that there exists 
a sequence $(f_k)$ in $\F$ such that 
either $f_k|_{S^+}\to 1$ and $f_k|_{S^-}\to \infty$  or
$f_k|_{S^+}\to \infty$ and $f_k|_{S^-}\to 1$.
Without loss of generality we may assume that the first alternative holds.
If $(f_k)$ is not normal at some $z_1\in L_1\setminus \{0\}$, then -- again by Theorem~\ref{thm-BE1} --
there exists a subsequence of $(f_k)$ which tends to $0$ or to $\infty$ in $S^+$.
This is incompatible with our previous assumption that $f_k|_{S^+}\to 1$.
Hence $(f_k)$ is normal on $\D\cap L_1\setminus \{0\}$.
We conclude that $(f_k)$ is normal in $S^+\cup S\cup L_1\setminus \{0\}$ and hence that
$f_k|_{S^+\cup S\cup L_1\setminus \{0\}}\to 1$.
In particular, $f_k|_{S}\to 1$.
On the other hand, $f_k|_{S^-}\to \infty$.
Hence $(f_k)$ is not normal at any point of $L_{-1}$.
Since $f_k|_{S^-}\to \infty$ we can now deduce from Theorem~\ref{thm-BE1} that 
$f_k|_{S}\to 0$.
This contradicts our previous finding that $f_k|_{S}\to 1$.
Thus $\F$ is normal on $L_0\setminus \{0\}$.
Putting $T=S^+\cup S^-\cup L_0\setminus \{0\}$ we conclude that $\F$ is normal in~$T$.

Suppose now that $\F$ is not normal at some point $z_0\in\D\setminus \{0\}$.
It follows that $z_0\in (L_1\cup L_{-1})\setminus \{0\}$.
Without loss of generality we may assume that $z_0\in L_1\setminus \{0\}$.
Theorem~\ref{thm-BE1} implies that there exists 
a sequence $(f_k)$ in $\F$ such that 
either $f_k|_{S}\to \infty$ and $f_k|_{T}\to 0$  or
$f_k|_{S}\to 0$ and $f_k|_{T}\to \infty$.
In particular we see that the sequence $(f_k)$ is not normal at any point of $L_1\cup L_{-1}$.
We begin by considering the case that the first of the two above possibilities holds; that is,
$f_k|_{S}\to \infty$ and $f_k|_{T}\to 0$.

We define $u_k\colon \D\to [-\infty,\infty)$, 
\begin{equation}\label{8a}
u_k(z)=\frac{\log|f_k(z)|}{\log|f_k(\frac12)|}.
\end{equation}
We will prove that the sequence $(u_k)$ is locally bounded in~$\D$.
Once this is known, we can deduce (see, for example,
\cite[Theorems 4.1.8, 4.1.9]{Hor} or
\cite[Theorems 3.2.12, 3.2.13]{Hor2})
that some subsequence of $(u_k)$ converges to a limit function $u$ which is subharmonic in $\D$ and
harmonic in $\D\setminus L_0$.
Moreover, $u(z)>0$ for $z\in S$ while $u(z)\leq 0$ for $z\in \D\setminus S$.

Lemma~\ref{lem3} now implies that $\alpha\geq \pi/2$ and that $\beta=\pi$ if $\alpha>\pi/2$.
The conclusion then follows since if $\alpha=\pi/2$, then 
$\angle(L_{-1},L_1)=2\alpha =\pi$, while if 
$\alpha>\pi/2$ and thus $\beta=\pi$, then
$\angle(L_{0},L_1)=\beta-\alpha =\pi-\alpha<\pi/2$ and
$\angle(L_{0},L_{-1})=2\pi-\alpha -\beta=\pi-\alpha=\angle(L_{0},L_1)$.

In order to prove that $(u_k)$ is locally bounded,
let $0<\varepsilon<1/8$. Proposition~\ref{prop1} yields that, for sufficiently large~$k$, there exist
simple closed curves $\Gamma_k$ in $\{z\colon 1-\varepsilon/2<|z|<1-\varepsilon/4\}$
and $\gamma_k$ in $\{z\colon \varepsilon/2<|z|<\varepsilon\}$ such that
$|f_k(z)|>1$ for $z\in (\Gamma_k\cup\gamma_k) \cap S$ 
while 
$|f_k(z)|<1$ for $z\in (\Gamma_k\cup\gamma_k) \cap T$.
Moreover, both $\Gamma_k$ and $\gamma_k$ 
surround $0$ and they intersect $L_1$ and $L_{-1}$ only once, at $1$-points of $f_k$.
In fact, these curves can be constructed by taking small segments orthogonal to 
$L_1$ and $L_{-1}$, and connecting the endpoints of these segments within the intersection
of $S$ and $T$ with the corresponding annuli.

Let $D_k$ be the domain between $\gamma_k$ and $\Gamma_k$ and let
$X_k$ be the set of all $z\in \overline{D_k}$ for which $|f_k(z)|=1$.
Then both $X_k\cap \Gamma_k$ and $X_k\cap \gamma_k$ consist of two $1$-points of $f_k$.
Let $U_k$ be the component of $D_k\setminus X_k$ which contains $\frac12$.
Next, for large $k$ we have $|f_k(z)|<1$ for $z\in L_0$ with $\varepsilon/2\leq |z|\leq 1-\varepsilon/4$ and
hence in particular for $z\in L_0\cap D_k$.
Let $V_k$ be the component  of $D_k\setminus X_k$ which contains $L_0\cap D_k$.
Then, for large $k$, we have $|f_k(z)|>1$ for $z\in U_k$ while $|f_k(z)|<1$ for $z\in V_k$. 

We claim that $D_k\setminus X_k=U_k\cup V_k$.
Indeed, let $W$ be a component of $D_k\setminus X_k$ which is different from $U_k$ and $V_k$.
Since $(\Gamma_k\cup\gamma_k)\cap S\subset \partial U_k$ 
and $(\Gamma_k\cup\gamma_k)\cap T\subset \partial V_k$  we have $\partial W\subset X_k$ for large~$k$.
By the maximum principle, we thus have $|f_k(z)|<1$ for $z\in W$. The minimum principle now yields
that $W$ contains a zero of $f_k$. Hence $W$ and thus $\partial W$ intersect $L_0\cap \overline{D_k}$, 
which is a contradiction for large~$k$, since $\partial W\subset X_k$ and thus $|f_k(z)|=1$ for 
$z\in\partial W$, but $f_k|_{L_0\cap \overline{D_k}}\to 0$.
Thus $D_k\setminus X_k=U_k\cup V_k$ as claimed.
We also conclude that $X_k$ consists of two analytic curves $\sigma_{1,k}$ and $\sigma_{-1,k}$, which are 
close to the rays $L_1$ and $L_{-1}$. 

We now prove that $(u_k)$ is bounded in $\overline{D}(0,1-\varepsilon)$. 
In order to do so we choose $c_k\in \partial D(0,1-\varepsilon)$ such that
\begin{equation}\label{8a1}
u_k(c_k)=\max_{|z|=1-\varepsilon} u_k(z) .
\end{equation}
Clearly, $c_k\in U_k$ for large $k$. 
For $j\in\{1,2,3,4\}$, we put $r_j=1-\varepsilon j/4$. Thus $|c_k|=1-\varepsilon=r_4$.
Similar to the curve $\Gamma_k$ in $\{z\colon r_2<|z|<r_1\}$ there exists a closed curve
$\Gamma_k'$ in $\{z\colon r_4<|z|<r_3\}$ which surrounds $0$
such that $|f_k(z)|>1$ for $z\in \Gamma_k'\cap S$ while 
$|f_k(z)|<1$ for $z\in \Gamma_k'\cap T$.
Thus $\Gamma_k'\cap S\subset U_k$ and $\Gamma_k'\cap T\subset V_k$.
 
By the maximum principle, there exists a curve $\xi_k$ in $U_k$
which connects $c_k$ with $\partial \D$ and on which $u_k$ is bigger than $u_k(c_k)$.
Let $\tau_k$ be a part of $\xi_k$ which connects $\partial D(0,r_3)$ with $\partial D(0,r_2)$ and,
except for its endpoints, is contained in $\{z\colon r_3<|z|<r_2\}$; see Figure~\ref{fig1}.
\begin{figure}[!htb]
%\captionsetup{width=.85\textwidth}
\centering
\begin{tikzpicture}[scale=5.,>=latex](-0.1,-1.1)(-1.1,1.1)
\clip (-0.1,-1.1) rectangle (2.6,1.1);
\draw[dash dot dot,->] (-0.1,0) -- (1.1,0);
\draw[dash dot dot,->] (0.,-1.1) -- (0.,1.1);
\draw[black] (0,0) circle (1.0);
\draw[black,dotted] (0,0) circle (0.8);
\draw[black,dotted] (0,0) circle (0.7);
\draw[black,dotted] (0,0) circle (0.6);
\draw[-] (0.1708,-0.6375) -- (0.4666,-0.4666);
\draw[-] (0.1708,0.6375) -- (0.4666,0.4666);
\draw [black,domain=-45:45,samples=100] plot ({0.66*cos(\x)}, {0.66*sin(\x)});
\draw [black,domain=-75:-100,samples=100] plot ({0.66*cos(\x)}, {0.66*sin(\x)});
\draw [black,domain=75:100,samples=100] plot ({0.66*cos(\x)}, {0.66*sin(\x)});
        \node[below right] at (0.0,-0.65) {$\Gamma_{k}'$};
\draw[black, dotted, rotate around={70:(0,0)}] (0,0) -- (1.0,0);
\draw[black, dotted, rotate around={60:(0,0)}] (0,0) -- (1.0,0);
\draw[black, dotted, rotate around={50:(0,0)}] (0,0) -- (1.0,0);
\draw[black, dotted, rotate around={-70:(0,0)}] (0,0) -- (1.0,0);
\draw[black, dotted, rotate around={-60:(0,0)}] (0,0) -- (1.0,0);
\draw[black, dotted, rotate around={-50:(0,0)}] (0,0) -- (1.0,0);
\draw[black,thick, rotate around={50:(0,0)}] (0.25,0) -- (0.8,0);
\draw[black,thick, rotate around={-50:(0,0)}] (0.25,0) -- (0.7,0);
\draw [black,thick,domain=-50:50,samples=100] plot ({0.25*cos(\x)}, {0.25*sin(\x)});
\draw [black,thick,domain=-50:-62,samples=100] plot ({0.7*cos(\x)}, {0.7*sin(\x)});
\draw [black,thick,domain=-42:50,samples=100] plot ({0.8*cos(\x)}, {0.8*sin(\x)});
\draw [black, ultra thick] plot [smooth, tension=0.5] coordinates { (.32,-0.6226) (0.4,-0.63) (0.53,-0.53) (0.6,-0.5291)};
\filldraw[black] (0.25,0) circle (0.01);
\filldraw[black] (0.5,0) circle (0.01);
\filldraw[black] (0.6,0) circle (0.01);
\filldraw[black] (0.7,0) circle (0.01);
\filldraw[black] (0.8,0) circle (0.01);
\filldraw[black] (1.0,0) circle (0.01);
\node[below right] at (0.23,0) {\small $\tfrac14$};
\node[below right] at (0.46,0) {\small $\tfrac12$};
\node[below right] at (0.56,0) {\small $r_4$};
\node[below right] at (0.67,0) {\small $r_3$};
\node[below right] at (0.78,0) {\small $r_2$};
\node[below right] at (0.98,0) {\small $1$};
\node[below right] at (0.4,-0.6) {\small $\tau_k$};
\node[below right] at (0.75,-0.37) {\small $\xi_k$};
\filldraw[black] (0.5,0.866) circle (0.01);
        \node[above right] at (0.5,0.866) {$e^{i\alpha}$};
\filldraw[black] (0.6427,0.766) circle (0.01);
        \node[above right] at (0.6427,0.766) {$e^{i(\alpha-\varepsilon)}$};
\filldraw[black] (0.3420,0.939) circle (0.01);
        \node[above right] at (0.3420,0.939) {$e^{i(\alpha+\varepsilon)}$};
\filldraw[black] (0.5,-0.866) circle (0.01);
        \node[right] at (0.5,-0.876) {$e^{-i\alpha}$};
\filldraw[black] (0.6427,-0.766) circle (0.01);
        \node[right] at (0.6427,-0.776) {$e^{-i(\alpha-\varepsilon)}$};
\filldraw[black] (0.3420,-0.939) circle (0.01);
        \node[right] at (0.3420,-0.949) {$e^{-i(\alpha+\varepsilon)}$};
\filldraw[black] (0.32,-0.6226) circle (0.01);
        \node[below right] at (0.56,-0.52) {$e_{k,2}$};
\filldraw[black] (0.6,-0.5291) circle (0.01);
        \node[below left] at (0.38,-0.61) {$e_{k,3}$};
\filldraw[black] (0.34,-0.4943) circle (0.01);
        \node[above left] at (0.35,-0.51) {$c_{k}$};
% curve from c_k to e_{k,3}
\draw [black] plot [smooth, tension=0.5] coordinates { (.32,-0.6226) (0.29,-0.6) (0.35,-0.55) (0.34,-0.4943)};
\draw [black] plot [smooth, tension=0.5] coordinates { (0.85,-0.5267) (0.75,-0.45) (0.7,-0.4) (0.63,-0.45) (0.65,-0.53) (0.6,-0.5291)};
\draw[dash dot dot,->] (1.4,0) -- (2.6,0);
\draw[dash dot dot,->] (1.5,-1.1) -- (1.5,1.1);
\draw [black,domain=-96:96,samples=100] plot ({1.5+cos(\x)}, {sin(\x)});
\draw [black,dotted,domain=-97.5:97.5,samples=100] plot ({1.5+0.8*cos(\x)}, {0.8*sin(\x)});
\draw [black,dotted,domain=-98.5:98.5,samples=100] plot ({1.5+0.7*cos(\x)}, {0.7*sin(\x)});
\draw[black, dotted, rotate around={70:(1.5,0)}] (1.5,0) -- (2.5,0);
\draw[black, dotted, rotate around={60:(1.5,0)}] (1.5,0) -- (2.5,0);
\draw[black, dotted, rotate around={50:(1.5,0)}] (1.5,0) -- (2.5,0);
\draw[black, dotted, rotate around={-70:(1.5,0)}] (1.5,0) -- (2.5,0);
\draw[black, dotted, rotate around={-60:(1.5,0)}] (1.5,0) -- (2.5,0);
\draw[black, dotted, rotate around={-50:(1.5,0)}] (1.5,0) -- (2.5,0);
\draw[black, ultra thick, rotate around={-70:(1.5,0)}] (2.2,0) -- (2.3,0);
\draw[black,thick, rotate around={-50:(1.5,0)}] (1.75,0) -- (2.2,0);
\draw[black,thick, rotate around={50:(1.5,0)}] (1.75,0) -- (2.3,0);
\draw[black,dotted] (0,0) circle (0.8);
\draw [black,thick,domain=-50:50,samples=100] plot ({1.5+0.25*cos(\x)}, {0.25*sin(\x)});
\draw [black,thick,domain=-70:0,samples=100] plot ({1.5+0.7*cos(\x)}, {0.7*sin(\x)});
\draw [black,thick,domain=-70:50,samples=100] plot ({1.5+0.8*cos(\x)}, {0.8*sin(\x)});
\filldraw[black] (1.75,0) circle (0.01);
\filldraw[black] (2.0,0) circle (0.01);
\filldraw[black] (2.2,0) circle (0.01);
\filldraw[black] (2.3,0) circle (0.01);
\filldraw[black] (2.5,0) circle (0.01);
\node[below right] at (1.73,0) {\small $\tfrac14$};
\node[below right] at (1.96,0) {\small $\tfrac12$};
\node[below right] at (2.18,0) {\small $r_3$};
\node[below right] at (2.28,0) {\small $r_2$};
\node[below right] at (2.48,0) {\small $1$};
\node[below right] at (1.64,-0.66) {\small $K$};
\filldraw[black] (2.0,0.866) circle (0.01);
        \node[above right] at (2.0,0.866) {$e^{i\alpha}$};
\filldraw[black] (2.1427,0.766) circle (0.01);
        \node[above right] at (2.1427,0.766) {$e^{i(\alpha-\varepsilon)}$};
\filldraw[black] (1.8420,0.939) circle (0.01);
        \node[above right] at (1.8420,0.939) {$e^{i(\alpha+\varepsilon)}$};
\filldraw[black] (2.0,-0.866) circle (0.01);
        \node[right] at (2.0,-0.876) {$e^{-i\alpha}$};
\filldraw[black] (2.1427,-0.766) circle (0.01);
        \node[right] at (2.1427,-0.776) {$e^{-i(\alpha-\varepsilon)}$};
\filldraw[black] (1.8420,-0.939) circle (0.01);
        \node[right] at (1.8420,-0.949) {$e^{-i(\alpha+\varepsilon)}$};
\end{tikzpicture}
\caption{The curves $\xi_k$, $\tau_k$ and $\Gamma_k'$ and the domains $G_k$ (left) and $H$ (right).}
\label{fig1}
\end{figure}
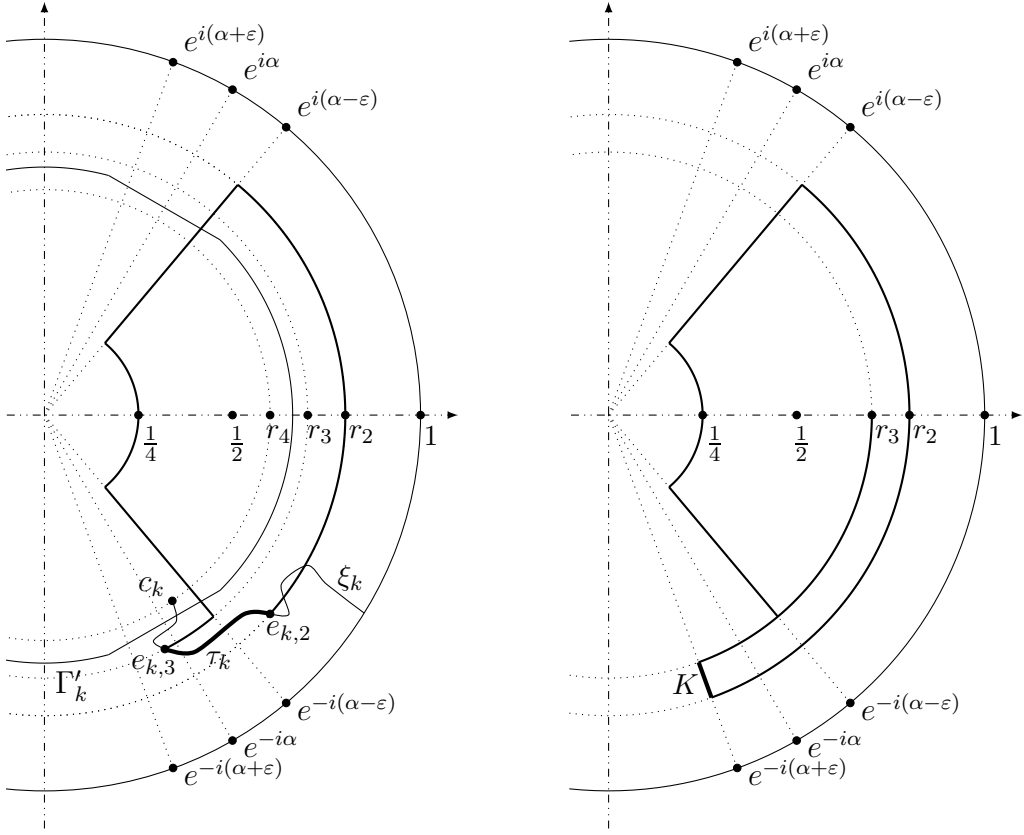
Then $|u_k(z)|\geq u_k(c_k)$ for $z\in \tau_k$.
Let $e_{k,j}$ be the endpoint of $\tau_k$ on $\partial D(0,r_j)$, for $j\in\{2,3\}$.
Without loss of generality we may assume that the distance of $e_{k,3}$ to $L_{-1}$ is less than
or equal to
the distance to $L_{1}$, which means that $\im e_{k,3}\leq 0$.

We define a domain $G_k$ as follows; cf.\ Figure~\ref{fig1}.
If $\tau_k$ does not intersect the segment
$\{re^{i(\alpha-\varepsilon)}\colon r_3 \leq r\leq r_2\}$, let $G_k$ be 
the domain bounded by the segments
$\{re^{-i(\alpha-\varepsilon)}\colon \frac14\leq r\leq r_3\}$ and
$\{re^{i(\alpha-\varepsilon)}\colon \frac14\leq r\leq r_2\}$,
the arc $\{\frac14 e^{i\theta}\colon |\theta|\leq \alpha-\varepsilon\}$,
the arc of $\partial D(0,r_3)$ that connects $e_{k,3}$ and $r_3 e^{-i(\alpha-\varepsilon)}$ in
$\{r_3e^{i\theta}\colon |\theta|\leq \alpha+\varepsilon\}$, 
the arc of $\partial D(0,r_2)$ that connects $e_{k,2}$ and $r_2 e^{i(\alpha-\varepsilon)}$ in 
$\{r_2e^{i\theta}\colon |\theta|\leq \alpha+\varepsilon\}$, 
and the curve~$\tau_k$.

If $\tau_k$ intersects the segment
$\{re^{i(\alpha-\varepsilon)}\colon r_3\leq r\leq r_2 \}$, let 
$d_k$ denote the first point of intersection so that the part $\tau_k'$ of
$\tau_k$ which is between $e_{k,3}$ and $d_k$ is contained in 
$\{re^{i\theta}\colon r_3< r< r_2, -\alpha-\varepsilon<\theta<\alpha-\varepsilon\}$.
We then define $G_k$ as the domain bounded by the 
the curve $\tau_k'$, the  segment $\{re^{i(\alpha-\varepsilon)}\colon \frac14\leq r\leq |d_k|\}$ and
-- as before --
the arc $\{\frac14 e^{i\theta}\colon |\theta|\leq \alpha-\varepsilon\}$,
the segment $\{re^{-i(\alpha-\varepsilon)}\colon \frac14\leq r\leq r_3\}$ and
the arc of $\partial D(0,r_3)$ that connects $e_{k,3}$ and $r_3 e^{-i(\alpha-\varepsilon)}$ in 
$\{r_3e^{i\theta}\colon |\theta|\leq \alpha+\varepsilon\}$.

We claim that $G_k\subset U_k$ for large~$k$.
In order to prove this it suffices to prove that $\partial G_k\subset U_k$.
We restrict to the case that $\tau_k$ does not intersect the segment
$\{re^{i(\alpha-\varepsilon)}\colon r_3 \leq r\leq r_2\}$, since the other case
is similar.
First we note that the segments
$\{re^{-i(\alpha-\varepsilon)}\colon \frac14\leq r\leq r_3\}$ and
$\{re^{i(\alpha-\varepsilon)}\colon \frac14\leq r\leq r_2\}$ as well as
the arc $\{\frac14 e^{i\theta}\colon |\theta|\leq \alpha-\varepsilon\}$
are clearly in $U_k$ for large~$k$, since $f_k|_S\to\infty$ as $k\to\infty$.
Since $\xi_k$ is in $U_k$ and $\tau_k$ is a subcurve of $\xi_k$, the curve $\tau_k$ 
is also in $U_k$.

It remains to show that the arc of $\partial D(0,r_3)$ that connects $e_{k,3}$ and
$r_3 e^{-i(\alpha-\varepsilon)}$ is in $U_k$. If this is not the case, then this arc
must intersect $\partial U_k$ and thus must intersect the curve 
$\sigma_{-1,k}$, which constitutes the part of $\partial U_k$ that is near $L_{-1}$.
Since $\xi_k$ is in $U_k$ this means that $\sigma_{-1,k}$ must also intersect $\Gamma_k'$,
at a point between the intersections of $\Gamma_k'$ with $\xi_k$ and with the positive 
real axis. But this part of $\Gamma_k'$ is in $U_k$, since 
$\Gamma_k'\cap S\subset U_k$ and $\Gamma_k'\cap T\subset V_k$.
Hence $\sigma_{-1,k}$ does not intersect the arc connecting $e_{k,3}$ and $r_3 e^{-i(\alpha-\varepsilon)}$
and thus this arc is in $U_k$. Similarly, we see that the arc 
of $\partial D(0,r_2)$ that connects $e_{k,2}$ and $r_2 e^{i(\alpha-\varepsilon)}$ is in $U_k$.
Altogether thus $G_k\subset U_k$ for large~$k$.

Let 
\begin{align}\label{defH}
H=
&\{re^{i\theta}\colon \tfrac14< r< r_3, |\theta|< \alpha-\varepsilon\}
\cup\{ r_3e^{i\theta}\colon 0<\theta <\alpha-\varepsilon\}
\\ &
\cup \{re^{i\theta}\colon r_3< r< r_2,
-\alpha-\varepsilon< \theta< \alpha-\varepsilon\}
\end{align}
and let $K=\{re^{i(-\alpha-\varepsilon)}\colon r_3\leq r\leq r_2\}\subset \partial H$; see Figure~\ref{fig1}.
Then $G_k\subset H$.

It follows from Lemma~\ref{lem4} and the configuration of the domains $G_k$ and $H$ that 
$\omega(z,K,H)\leq \omega(z,\tau_k,G_k)$ for $z\in G_k$.
In particular,
$\omega(\frac12,K,H)\leq \omega(\frac12,\tau_k,G_k)$.
On the other hand, since $G_k\subset U_k$ and thus $u_k(z)\geq 0$ for $z\in\partial G_k$ while 
$u_k(z)\geq u_k(c_k)$ for $z\in\tau_k$ it follows that
$u_k(z)\geq u_k(c_k)\omega(z,\tau_k,G_k)$ for $z\in G_k$.
Altogether we thus have 
\begin{equation}\label{8b}
1=u_k(\tfrac12)\geq   u_k(c_k)\omega(\tfrac12,\tau_k,G_k)\geq  u_k(c_k)\omega(\tfrac12,K,H).
\end{equation}
It follows that
\begin{equation}\label{8a2}
\max_{|z|=1-\varepsilon} u_k(z) = u_k(c_k) \leq \frac{1}{\omega(\tfrac12,K,H)}
\end{equation}
so that $(u_k)$ is bounded in $\overline{D}(0,1-\varepsilon)$. Since $\varepsilon>0$ can be taken
arbitrarily small, we conclude that $(u_k)$ is locally bounded in $\D$.
This completes the proof in the case that
$f_k|_{S}\to \infty$ and $f_k|_{T}\to 0$.

It remains to consider the case that
$f_k|_{S}\to 0$ and $f_k|_{T}\to \infty$.
Since $L_0\setminus\{0\}\subset T$ we conclude that if $\varepsilon>0$, then, for large~$k$,
the function $f_k$ has no zeros in $\{z\colon \varepsilon<|z|<1-\varepsilon\}$. Thus $u_k$ is harmonic there.
As before we see that the sequence $(u_k)$ is locally 
bounded so that some subsequence of it converges to a limit $u$ which is subharmonic in $\D$.
But now $u$ is actually harmonic in $\D\setminus \{0\}$.
The conclusion follows again from  Lemma~\ref{lem3} which yields that 
$\alpha=\pi/2$ and hence $\angle(L_{-1},L_1)=2\alpha =\pi$. \qed

\noindent
\emph{Walter Bergweiler}\\
{\sc Mathematisches Seminar\\
Christian-Albrechts-Universit\"at zu Kiel\\
Ludewig-Meyn-Str.\ 4\\
24098 Kiel\\
Germany}\\
{\tt Email: bergweiler@math.uni-kiel.de}

\medskip

\noindent
\emph{Alexandre Eremenko}\\
{\sc Department of Mathematics\\
Purdue University\\
West Lafayette, IN 47907\\
USA}\\
{\tt Email: eremenko@math.purdue.edu}
\end{document}